\newtheorem{theorem}{Theorem}[section]
\newtheorem{corollary}[theorem]{Corollary}
\newtheorem{lemma}[theorem]{Lemma}
\newtheorem{proposition}[theorem]{Proposition}
\theoremstyle{definition}
\newtheorem{definition}[theorem]{Definition}
\newtheorem{example}[theorem]{Example}
\theoremstyle{remark}
\title{Periodicity Uncovered: A Deep Dive into Bott's Theorems in K-Theory and Fiber Bundles}
\author{Ivan Z. Feng\thanks{\href{mailto:ifeng@usc.edu}{ifeng@usc.edu} \quad \textbar \quad \href{https://dornsife.usc.edu/ivan/}{dornsife.usc.edu/ivan}}}
\date{May 10, 2023}
\begin{document}
\maketitle

\begin{abstract}
This paper presents a comparison between two versions of Bott Periodicity Theorems: one in topological K-theory and the other in stable homotopy groups of classical groups. It begins with an introduction to K-theory, discussing vector bundles and their role in understanding the algebraic and topological aspects of these spaces. Then the two versions of Bott periodicity, as well as the topological notions necessary to understand them, are further explored. The aim is to illustrate the connections and distinctions between these two theorems, deepening our understanding of their underlying mathematical structures such as topological K-theory and fiber bundles.
\end{abstract}

\begin{center}
  \tableofcontents
\end{center}
\section{Introduction to Essentials of K-Theory}

In this section, we aim to introduce two versions of K-theory, $K(X)$ and $\widetilde{K}(X)$ for a fixed base space $X$, as well as their related motivations and properties. They are the foundation of further development of the Bott periodicity.

\subsection{Vector Bundles}

Let's begin with the foundation of topological K-theory: vector bundles. The concept of vector bundles originated in the early 20th century, with significant contributions from mathematicians such as Hermann Weyl, Hassler Whitney, and Norman Steenrod. It has since become a fundamental object of study in algebraic and differential topology, as well as a central tool in areas such as gauge theory and index theory.

The main idea behind vector bundles is to associate a vector space to every point of a topological space $X$ in such a way that these vector spaces fit together in a locally trivial manner. This structure allows us to study local properties of the base space $X$ by analyzing the corresponding properties of the associated vector spaces. With this motivation in mind, we can now introduce the formal definition of vector bundles:

\begin{definition}[Vector Bundle]\label{def:vector_bundle}  
A \textbf{vector bundle} is a family of vector spaces $p: E \rightarrow X$ that has a local strong basis at every point of $X$. In other words, for each $x \in X$ there is a neighborhood $x \in U \subseteq X$, an $n \in \mathbb{Z}_{\geq 0} \cup\{\infty\}$, and an isomorphism\footnote{In this paper we adopt the symbol $\approx$ to mean \textit{isomorphism} as used in Hatcher's \cite{hatcher2002algebraic} and \cite{hatcher2003vector}, instead of the more ubiquitous symbol  $\cong$ as used in \cite{atiyah1967k} and \cite{dugger2007notes}. In general, the symbol $\approx$ is usually used to denote a weaker notion of equivalence that may allow for more flexibility or ambiguity in the choice of isomorphisms, while $\cong$ is a stronger notion of equivalence that implies a more canonical choice of isomorphism.} of families of vector spaces
\begin{center}
\begin{tikzpicture}
\matrix (m) 
[matrix of math nodes,row sep=4em,column sep=4em,minimum width=2em] 
{
p^{-1}(U) & {U\times \mathbb{R} ^{n}} \\
          & U \\ 
}; 
\path[-stealth] 
(m-1-1) edge node [above] {$\approx$} (m-1-2)
        edge node [below] {} (m-2-2)
(m-1-2) edge [solid]  node [right] {} (m-2-2);
\end{tikzpicture}
\end{center}

\end{definition}

Note that when $n=\infty$ the space $\mathbb{R}^{n}$ has the colimit topology. The isomorphism in the above diagram is called a \textbf{local trivialization}. Usually, one simply says that a vector bundle is a family of vector spaces that is locally trivial. In particular, the \textbf{trivial bundle} (or product bundle) is given by $E=B \times \mathbb{R}^{n}$ with $p$ the projection onto the first factor. Let's provide some more typical examples.

\begin{example}[Line Bundle]\label{ex:line_bundle}
Let $E$ be the quotient space of $I \times \mathbb{R}$ under the identifications $(0, t) \sim(1,-t)$. The projection $I \times \mathbb{R} \rightarrow I$ induces a map $p: E \rightarrow S^{1}$, which is a 1-dimensional vector bundle. We call this bundle the \textbf{line bundle}. Here since $E$ is homeomorphic to a Möbius band with its boundary circle deleted, we also call this bundle the \textbf{Möbius bundle}.
\end{example}

\begin{example}[Canonical Line Bundle]
Consider the real projective $n$-space, denoted as $\mathbb{R}\mathrm{P}^n$, which represents the set of lines in $\mathbb{R}^{n+1}$ that pass through the origin. Since each such line intersects the  $n$-dimensional unit sphere $S^{n}$ in a pair of antipodal points, we can view $\mathbb{R}\mathrm{P}^n$ as the quotient space of $S^n$, where antipodal pairs of points are identified. Corresponding to this space, we have the \textbf{canonical line bundle} $p: E \rightarrow \mathbb{R}\mathrm{P}^n$. The total space of this bundle, $E$, is a subspace of $\mathbb{R}\mathrm{P}^n \times \mathbb{R}^{n+1}$, consisting of pairs $(\ell, v)$, where $v \in \ell$. The projection map $p$ is defined as $p(\ell, v) = \ell$. Local trivializations can be defined by orthogonal projection.
\end{example}

Now, to further examine this structure, let's proceed with the following definition.

\begin{definition}[$\operatorname{Vect}^n(B)$ and $\operatorname{Vect}_{\mathbb{C}}^n(B)$]
The set of isomorphism classes of $n$-dimensional \textit{real} vector bundles over $B$ is denoted by $\operatorname{Vect}^n(B)$\footnote{As a convention, we usually omit $\mathbb{R}$ as it is a common default assumption in math.}. Similarly, the set of isomorphism classes of $n$-dimensional \textit{complex} vector bundles over $B$ is denoted by $\operatorname{Vect}_{\mathbb{C}}^n(B)$. 
\end{definition}

Now let's use the collection of vector bundles over a base space to explore how a continuous map between two spaces can relate their corresponding vector bundles. This leads us to the concept of pullback bundles.

\begin{definition}[Pullback Bundle]\label{def:pullback_bundle}
Given a map $f: A \rightarrow B$ and a vector bundle $p: E \rightarrow B$, there exists a \textit{unique} (up to isomorphism) vector bundle $p^\prime: E^\prime \rightarrow A$ with a map $f^\prime: E^\prime \rightarrow E$ such that the fiber of $E^\prime$ over each point $a \in A$ is isomorphic onto the fiber of $E$ over $f(a)$. This vector bundle $E^\prime$ is called the \textbf{pullback} of $E$ by $f$ and is often denoted as $f^*(E)$.
\end{definition}

Notice that, in this definition, we assumed the existence and uniqueness of the vector bundles $p^\prime$ and $E'$. In fact, this can be easily proven as shown in Section 1.2 of \cite{hatcher2003vector}. The pullback bundle allows us to study how the structure of vector bundles is preserved or transformed under continuous maps between base spaces. By considering the uniqueness of the pullback bundle, we can establish a function $f^*: \operatorname{Vect}(B) \rightarrow \operatorname{Vect}(A)$, taking the isomorphism class of $E$ to the isomorphism class of $E^\prime$. This function gives us a way to relate the vector bundles of two spaces through a continuous map, and it further enriches our understanding of the interplay between topology and vector bundles.

\subsection{Two Versions of K-theory: \texorpdfstring{$\widetilde K(X)$}{K(X)} and \texorpdfstring{${K}(X)$}{K(X)}}

Building upon vector bundles, K-theory, which was developed by Michael Atiyah and Friedrich Hirzebruch in the late 1950s, is a powerful tool in algebraic topology and has since found applications in various branches of mathematics. 

The reduced K-theory group $\widetilde{K}(X)$ is introduced to better understand and organize the structure of vector bundles over a fixed base space $X$. To define it, we first need to introduce a related concept and establish some inspiration.

\begin{definition}[Stable Isomorphism]
Two vector bundles $E_1$ and $E_2$ over $X$ are called \textbf{stably isomorphic}, written $E_1 \approx_s E_2$, if $E_1 \oplus \varepsilon^n \approx E_2 \oplus \varepsilon^n$ for some $n$, where $\varepsilon^n$ is the trivial $n$-dimensional vector bundle over $X$.
\end{definition}

The motivation for this definition is to identify bundles that differ only by a \textit{trivial} bundle, focusing on their \textit{essential} structure. This idea of identification is widely used in math; for example, we identify spaces that are homotopy equivalent or homeomorphic, thereby focusing on their \textit{intrinsic} topological properties rather than the \textit{specific} details of their construction.

Now, let's consider the set of stable isomorphism classes of vector bundles over $X$. We can define an abelian group structure on this set, as shown in the following proposition:

\begin{proposition} 
If $X$ is compact Hausdorff, then the set of stable isomorphism classes of vector bundles over $X$ forms an abelian group with respect to the direct sum operation $\oplus$.
\end{proposition}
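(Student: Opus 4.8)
The plan is to verify the abelian-group axioms one at a time, with $\oplus$ as the operation and the common stable class of the trivial bundles $\varepsilon^n$ as the candidate zero element. First I would check that $\oplus$ descends to a well-defined operation on stable isomorphism classes. Suppose $E_1 \approx_s E_1'$ and $E_2 \approx_s E_2'$, say $E_i \oplus \varepsilon^{n_i} \approx E_i' \oplus \varepsilon^{n_i}$ for $i = 1,2$. Using the natural isomorphisms $\varepsilon^a \oplus \varepsilon^b \approx \varepsilon^{a+b}$ together with the commutativity and associativity of $\oplus$ up to isomorphism, one rearranges to obtain $(E_1 \oplus E_2) \oplus \varepsilon^{n_1+n_2} \approx (E_1' \oplus E_2') \oplus \varepsilon^{n_1+n_2}$, hence $E_1 \oplus E_2 \approx_s E_1' \oplus E_2'$. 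Associativity and commutativity of the induced operation are then immediate, since they already hold for $\oplus$ at the level of isomorphism classes of bundles and stable isomorphism is coarser. The identity axiom is also clear: $E \oplus \varepsilon^0 \approx E$, so the class of $\varepsilon^0$ is a two-sided identity, and every trivial bundle $\varepsilon^k$ represents it because $\varepsilon^k \approx_s \varepsilon^0$.

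The crux — and the step I expect to be the main obstacle — is the existence of inverses, and this is exactly where the compact Hausdorff hypothesis is needed. The key lemma to establish is: for every (finite-dimensional) vector bundle $E \to X$ there is a bundle $E' \to X$ with $E \oplus E' \approx \varepsilon^N$ for some $N$. Granting the lemma, the stable class of $E'$ inverts the stable class of $E$, since $E \oplus E' \approx \varepsilon^N \approx_s \varepsilon^0$, and then all four axioms are in place.

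To prove the lemma I would argue as follows. By compactness, choose a finite open cover $U_1, \dots, U_m$ of $X$ over which $E$ is trivial, with local trivializations $h_i : p^{-1}(U_i) \xrightarrow{\approx} U_i \times \mathbb{R}^{n_i}$. Since a compact Hausdorff space is normal, there is a partition of unity $\varphi_1, \dots, \varphi_m$ subordinate to this cover. Using the $\varphi_i$ to patch together the projections to $\mathbb{R}^{n_i}$ of the maps $h_i$, one builds a map $g : E \to \mathbb{R}^{n_1 + \cdots + n_m}$ that is linear and injective on each fiber; together with $p$ this exhibits $E$ as a subbundle of $X \times \mathbb{R}^N = \varepsilon^N$, where $N = n_1 + \cdots + n_m$. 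Equipping $\varepsilon^N$ with an inner product (again built locally and glued with a partition of unity) and letting $E'$ be the fiberwise orthogonal complement of the image of $E$, one gets $E \oplus E' \approx \varepsilon^N$, which is the lemma. The full details of this embedding argument are carried out in Section 1.2 of \cite{hatcher2003vector}.
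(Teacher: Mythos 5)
Your overall architecture is exactly the intended one (the paper only gestures at ``verify the group axioms,'' and the heavy lifting is indeed the embedding-into-a-trivial-bundle lemma via a partition of unity on a compact Hausdorff space, which is Hatcher's Proposition~1.4). But there is a genuine gap at the two places where you invoke $\varepsilon^N \approx_s \varepsilon^0$ and $\varepsilon^k \approx_s \varepsilon^0$. With the definition of stable isomorphism used in this paper --- $E_1 \approx_s E_2$ iff $E_1 \oplus \varepsilon^n \approx E_2 \oplus \varepsilon^n$ for a \emph{single} $n$ added to both sides --- the fiber dimension is a stable-isomorphism invariant: $\varepsilon^N \oplus \varepsilon^n \approx \varepsilon^{N+n}$ has dimension $N+n$, while $\varepsilon^0 \oplus \varepsilon^n$ has dimension $n$, so $\varepsilon^N \not\approx_s \varepsilon^0$ for any $N>0$ (over nonempty $X$). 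Consequently your key lemma $E \oplus E' \approx \varepsilon^N$ does \emph{not} produce an inverse for the class of $E$ under $\approx_s$: the only $\approx_s$-class that can be invertible is that of the zero-dimensional bundle, and the monoid of $\approx_s$-classes is not a group.

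The fix is to coarsen the equivalence relation to $E_1 \sim E_2$ iff $E_1 \oplus \varepsilon^n \approx E_2 \oplus \varepsilon^m$ for some possibly different $m$ and $n$; this is the relation Hatcher actually uses to define $\widetilde{K}(X)$, and under it $\varepsilon^N \sim \varepsilon^0$ is immediate, so your embedding lemma does yield inverses and every step of your argument goes through verbatim. To be fair, this imprecision originates in the paper's own statement of the proposition (and its definition of $\widetilde{K}(X)$), not in your reasoning: you reproduced the standard proof faithfully, but as written it proves the group structure for $\sim$-classes, not for $\approx_s$-classes. You should either flag the discrepancy or restate the proposition with the coarser relation; everything else --- well-definedness of $\oplus$ on classes, associativity, commutativity, the identity, and the compactness/partition-of-unity construction of the complementary bundle $E'$ --- is correct and complete.
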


The motivation for this proposition is to organize vector bundles over a fixed base space $X$ into a group structure, which allows us to study their properties and interactions more systematically. Its proof relies on showing that the set of stable isomorphism classes of vector bundles satisfies the group axioms (closure, associativity, existence of identity, and existence of inverses) under the direct sum operation. 

Once we have established this fact, we can now introduce the reduced K-theory group $\widetilde{K}(X)$.

\begin{definition}[Reduced K-theory Group]
Given a compact Hausdorff space $X$, the abelian group formed by the set of stable isomorphism classes of vector bundles over $X$ under the direct sum operation $\oplus$ is called the \textbf{reduced K-theory group}, denoted by $\widetilde{K}(X)$.
\end{definition}

Let us provide an example for a better understanding of these definitions above.

\begin{example} 
Let $X =: S^1$, the circle. Consider the following two complex line bundles over $X$: the trivial bundle $E_1 = \varepsilon^1$ and the Möbius bundle $E_2$. By Example \ref{ex:line_bundle}, the Möbius bundle $E_2$ is not isomorphic to the trivial bundle $E_1$. However, we can show that $E_2 \oplus E_2 \approx \varepsilon^1$, which means, by definition, that $E_2$ and the trivial bundle $\varepsilon^1$ are \textit{stably isomorphic} and belong to the same stable isomorphism class in $\widetilde{K}(X)$. 
\end{example}

By defining the reduced K-theory group, we create a structure that allows us to analyze vector bundles in a systematic way. This group helps us uncover relationships between vector bundles and other algebraic and topological concepts, paving the way for motivation of the following definition. 


\begin{definition}[K-theory Group $K(X)$]
The \textbf{K-theory group} $K(X)$ consists of formal differences $E - E'$ of complex vector bundles $E$ and $E'$ over $X$, modulo an equivalence relation defined by "$E_1 - E_1' \sim E_2 - E_2'$ if and only if $E_1 \oplus E_2' \approx_s E_2 \oplus E_1'$". The group operation is given by the direct sum of vector bundles, i.e., $(E_1 - E_1') \oplus (E_2 - E_2') = (E_1 \oplus E_2) - (E_1' \oplus E_2')$. 
\end{definition}

The motivation for this definition is to create a more refined group structure that captures differences between vector bundles.

\begin{example} [K-theory Group of a Single Point] \label{ex:K_group_pt} 
Consider the base space $X$ to be a single point, denoted as $pt$. We will explore the K-theory group $K(pt)$ for this space.

Since $X$ consists of a single point, the only vector bundles over $X$ are trivial bundles. Let $n$ and $m$ be non-negative integers representing the dimensions of the trivial bundles $\varepsilon^n \rightarrow pt$ and $\varepsilon^m \rightarrow pt$, respectively. Notice that the direct sum of these bundles is another trivial bundle of dimension $n+m$, i.e., $\varepsilon^n \oplus \varepsilon^m \approx \varepsilon^{n+m}$.

By the definition above, the K-theory group $K(pt)$ consists of formal differences of trivial bundles, such as $\varepsilon^n - \varepsilon^m$. We can see that $\varepsilon^n - \varepsilon^m \sim \varepsilon^k - \varepsilon^l$ if and only if $\varepsilon^n \oplus \varepsilon^l \approx_s \varepsilon^k \oplus \varepsilon^m$. Since all the bundles are trivial, it follows that $\varepsilon^n \oplus \varepsilon^l \approx \varepsilon^{n+l}$ and $\varepsilon^k \oplus \varepsilon^m \approx \varepsilon^{k+m}$. Thus, the equivalence relation simply reduces to $n+l = k+m$. In addition, the group operation in $K(pt)$ is given by the direct sum of the differences of trivial bundles, i.e., $(\varepsilon^n - \varepsilon^m) \oplus (\varepsilon^k - \varepsilon^l) = (\varepsilon^n \oplus \varepsilon^k) - (\varepsilon^m \oplus \varepsilon^l) = \varepsilon^{n+k} - \varepsilon^{m+l}$.

From these observations, we can see that $K(pt)$ is isomorphic to the group of integers $\mathbb{Z}$ under the operation of addition, with the isomorphism given by $\varepsilon^n - \varepsilon^m \mapsto n - m$. 

\end{example}

Again, the motivation behind the reduced K-theory group $\widetilde{K}(X)$ and the K-theory group $K(X)$ is to provide a systematic way of studying vector bundles and their interactions. By organizing vector bundles into these groups, we are enabled to uncover deep results and explore various applications in algebraic topology, geometry, and mathematical physics.

\subsection{The Fundamental Product Theorem}\label{sec:Fundamental_Product_Theorem}

In Example \ref{ex:K_group_pt}, we have gained an in-depth understanding of K-theory groups of a single \textit{point}. In this subsection, we shall focus on the Fundamental Product Theorem, a key result that allows us to compute $K(X)$ for \textit{nontrivial} cases. Specifically, we aim to derive a formula for calculating $K\left(X \times S^{2}\right)$ in terms of $K(X)$. This formula will be crucial for deducing Bott Periodicity in the subsequent section.

Recall the canonical line bundle $H$ over $S^{2} $ is isomorphic to $ \mathbb{C}P^{1}$. It is easy to show the canonical line bundle $H\rightarrow \mathbb{C}P^{1}$ satisfies the relation $(H \otimes H) \oplus 1 \approx H \oplus H$. (Details see Example 1.13 in \cite{hatcher2003vector}). In $K\left(S^{2}\right)$, this translates to the formula $H^{2} + 1 = 2H$, i.e., $H^{2} = 2H - 1$, or, $(H-1)^{2} = 0$. Consequently, we have a natural ring homomorphism $\mathbb{Z}[H] /(H-1)^{2} \rightarrow K\left(S^{2}\right)$, with the domain being the quotient ring of the polynomial ring $\mathbb{Z}[H]$ by the ideal generated by $(H-1)^{2}$. Note that an additive basis for $\mathbb{Z}[H] /(H-1)^{2}$ is $\{1, H\}$. 

\begin{definition}[External Product]
The \textbf{external product} is a map that combines the $K$-theory groups of two spaces into a single $K$-theory group of their Cartesian product. It is defined as
\begin{equation*}
K(X) \otimes K(Y) \rightarrow K(X \times Y).
\end{equation*}
\end{definition}

With the external product in place, we can define a homomorphism $\mu$ as the composition:
\begin{equation*}
\mu: K(X) \otimes \mathbb{Z}[H] /(H-1)^{2} \rightarrow K(X) \otimes K\left(S^{2}\right) \rightarrow K\left(X \times S^{2}\right),
\end{equation*} where the second map is the external product. This homomorphism indeed can be further shown to be an isomorphism:

\begin{theorem}[Fundamental Product Theorem \cite{hatcher2003vector}]\label{th:fpt}
The homomorphism $\mu: K(X) \otimes \mathbb{Z}[H] /(H-1)^{2} \rightarrow K\left(X \times S^{2}\right)$ is an isomorphism of rings for all compact Hausdorff spaces $X$.
\end{theorem}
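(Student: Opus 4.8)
The plan is to prove that $\mu$ is an isomorphism by the standard two-pronged strategy: first reduce the general compact Hausdorff case to the case where $X$ is a finite CW complex (or a compact manifold) by a limiting/approximation argument, and then handle that case via a careful analysis of clutching functions for bundles over $X \times S^2$. The key structural idea is that every vector bundle over $X \times S^2$ can be described by a \emph{clutching function}, i.e. an automorphism of a pulled-back bundle over $X \times S^1$ (the equator), and that up to isomorphism one may take this clutching function to be a Laurent polynomial in the loop coordinate $z$, with coefficients endomorphisms of bundles over $X$.

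\medskip

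The first main step is the reduction to Laurent polynomial clutching functions. Given a bundle $E \to X \times S^2$, write $S^2 = D^2_+ \cup_{S^1} D^2_-$; since $D^2_\pm$ are contractible, $E$ restricted to $X \times D^2_\pm$ is pulled back from a bundle $E_0 \to X$, and $E$ is recovered from the clutching function $f : X \times S^1 \to \operatorname{Aut}(\pi^* E_0)$. By a Stone–Weierstrass-type argument (approximating the continuous clutching function uniformly by Laurent polynomials, using compactness of $X \times S^1$ and the openness of the automorphism condition), one shows $E$ is isomorphic to a bundle $[E_0, f]$ with $f(x,z) = \sum_{|k|\le n} a_k(x) z^k$ a Laurent polynomial clutching function. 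Multiplying through by $z^n$ changes the bundle in a controlled way (tensoring by a power of $H$), so one may further reduce to \emph{polynomial} clutching functions $f(x,z)=\sum_{k=0}^{n} a_k(x)z^k$.

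\medskip

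The second main step — and the technical heart — is a linearization: any polynomial clutching function of degree $\le n$ can, after adding trivial summands and performing elementary row/column operations (which do not change the isomorphism class of the resulting bundle over $X \times S^2$), be brought to a \emph{linear} clutching function of the form $f(x,z) = (1-z)p(x) + z$ where $p(x)$ is a projection (idempotent endomorphism) of a trivial bundle $X \times \mathbb{C}^N$. The bundle attached to such a linear clutching function is, essentially by construction, the bundle $[\operatorname{im}(p)] \otimes H \oplus [\ker(p)]$, whose class in $K(X\times S^2)$ lies in the image of $\mu$. This shows $\mu$ is \emph{surjective}.

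\medskip

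For injectivity one argues that the decomposition is essentially unique: two linear clutching data giving isomorphic bundles over $X \times S^2$ must have stably isomorphic image and kernel bundles over $X$, so the relations $H^2 = 2H - 1$ are the \emph{only} relations, and the additive basis $\{1, H\}$ for $\mathbb{Z}[H]/(H-1)^2$ maps to a genuine free basis of $K(X \times S^2)$ over $K(X)$. Combining surjectivity and injectivity gives that $\mu$ is a bijective ring homomorphism, hence a ring isomorphism. I expect the main obstacle to be the second step: the bookkeeping needed to show that adding trivial bundles and performing elementary operations reduces an arbitrary polynomial clutching function to a linear one, and that these operations are realized by genuine bundle isomorphisms over $X \times S^2$ rather than merely over the equator — this is where one must be most careful, and it is essentially the content of Hatcher's treatment in \cite{hatcher2003vector} that I would follow in detail.
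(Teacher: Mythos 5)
The paper itself does not prove this theorem; it states it and defers entirely to Hatcher's \emph{Vector Bundles and K-Theory}. Your outline is precisely the clutching-function proof from that reference: decompose $S^2$ into two disks, describe bundles over $X\times S^2$ by clutching functions over $X\times S^1$, approximate by Laurent polynomial clutching functions, reduce to polynomial and then (by adding trivial summands and elementary operations) to linear ones, split a linear clutching function via a projection $p$ into an $H$-twisted part and an untwisted part to get surjectivity, and establish injectivity from the essential uniqueness of that decomposition. So you have reproduced the intended proof at the level of its main lemmas; the structure is right.

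Two caveats. First, your opening reduction to finite CW complexes is both unnecessary and a liability: the clutching-function argument works verbatim for arbitrary compact Hausdorff $X$ (all that is used is compactness of $X\times S^1$ for the uniform Laurent approximation and partition-of-unity arguments for local triviality), whereas justifying a limit argument over CW approximations would require a continuity property of $K$ under inverse limits that is itself nontrivial and is not needed here. You should simply delete that step. Second, the injectivity paragraph is the thinnest part of the proposal: ``the decomposition is essentially unique'' is the conclusion one wants, not an argument. In Hatcher's treatment injectivity is obtained by constructing an explicit inverse homomorphism $\nu\colon K(X\times S^2)\to K(X)\otimes\mathbb{Z}[H]/(H-1)^2$ out of the canonical $(+/-)$ splitting of a linear clutching function, and the real work is checking that $\nu$ is well defined, i.e.\ independent of the choice of Laurent approximation, of the degree $n$ used in the linearization, and of homotopies of clutching functions. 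That verification, rather than the linearization bookkeeping you single out, is where the proof is most delicate. (A minor bookkeeping point: with your convention $f(x,z)=(1-z)p(x)+z$, the resulting bundle is $\operatorname{im}(p)\oplus\bigl(\ker(p)\otimes H\bigr)$, not $\bigl(\operatorname{im}(p)\otimes H\bigr)\oplus\ker(p)$; this does not affect the argument.)
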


By taking $X$ as a point, we obtain the following corollary:

\begin{corollary}\label{cor:fpt}
The map $\mathbb{Z}[H] /(H-1)^{2} \rightarrow K\left(S^{2}\right)$ is an isomorphism of rings.
\end{corollary}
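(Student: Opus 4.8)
The plan is to deduce the corollary directly from Theorem \ref{th:fpt} by specializing to the case $X = pt$, a single point, and then tracking how each object in the statement of the Fundamental Product Theorem simplifies. First I would invoke Theorem \ref{th:fpt} with $X = pt$ to obtain a ring isomorphism $\mu \colon K(pt) \otimes \mathbb{Z}[H]/(H-1)^2 \xrightarrow{\ \approx\ } K(pt \times S^2)$.

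Next I would simplify the two sides. On the left, Example \ref{ex:K_group_pt} gives a ring isomorphism $K(pt) \approx \mathbb{Z}$ (sending $\varepsilon^n - \varepsilon^m$ to $n-m$, with multiplication induced by tensor product of trivial bundles). Since $\mathbb{Z}$ is the unit object for the tensor product of $\mathbb{Z}$-algebras, the natural map $\mathbb{Z} \otimes_{\mathbb{Z}} \mathbb{Z}[H]/(H-1)^2 \to \mathbb{Z}[H]/(H-1)^2$, $n \otimes f \mapsto nf$, is a ring isomorphism; composing, the left-hand side is identified with $\mathbb{Z}[H]/(H-1)^2$. On the right, the projection $pt \times S^2 \to S^2$ is a homeomorphism, and pullback along a homeomorphism induces a ring isomorphism on $K$-theory, so $K(pt \times S^2) \approx K(S^2)$.

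It then remains to check that, under these identifications, $\mu$ becomes precisely the ring homomorphism $\mathbb{Z}[H]/(H-1)^2 \to K(S^2)$ described in Section \ref{sec:Fundamental_Product_Theorem}, namely the one induced by $H \mapsto H$ together with the relation $(H-1)^2 = 0$ in $K(S^2)$. This is a matter of unwinding the definition of the external product: for $X = pt$ the composite $K(pt) \otimes K(S^2) \to K(pt \times S^2) \approx K(S^2)$ sends $a \otimes \xi$ to $p_1^*(a) \cdot p_2^*(\xi)$, and under $pt \times S^2 \approx S^2$ this is simply $a\xi$ once $a$ is regarded as an integer; hence $1 \otimes H \mapsto H$, as required. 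Well-definedness of the target map was already established before the statement of Theorem \ref{th:fpt}.

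I expect the only point needing genuine care — rather than a true obstacle — is the bookkeeping that every intermediate map is a ring, not merely a group, homomorphism, and that the identification of $\mu$ with the canonical map $H \mapsto H$ respects the multiplicative structure; once the external product is written out explicitly with a one-point factor, this is immediate.
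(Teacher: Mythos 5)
Your proposal is correct and follows exactly the paper's route: the paper obtains this corollary simply by specializing Theorem \ref{th:fpt} to $X = pt$, which is precisely your first step. The additional bookkeeping you supply --- identifying $K(pt) \otimes \mathbb{Z}[H]/(H-1)^2$ with $\mathbb{Z}[H]/(H-1)^2$ via $K(pt) \approx \mathbb{Z}$ and checking that $\mu$ reduces to the canonical map $H \mapsto H$ --- is a sound and welcome elaboration of details the paper leaves implicit.
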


Thus, when we regard $\widetilde{K}\left(S^{2}\right)$ as the kernel of $K\left(S^{2}\right) \rightarrow K\left(x_{0}\right)$, it is generated as an abelian group by $H-1$. Since we have the relation $(H-1)^{2}=0$, this means that the multiplication in $\widetilde{K}\left(S^{2}\right)$ is completely trivial: the product of any two elements is zero.\footnote{Here the situation is similar to that in $H^{}\left(S^{2} ; \mathbb{Z}\right)$ and $\widetilde{H}^{}\left(S^{2} ; \mathbb{Z}\right)$, with $H-1$ behaving like the generator of $H^{2}\left(S^{2} ; \mathbb{Z}\right)$.}

\section{Bott Periodicity for Topological K-theory}

In this section, we delve into the intricate world of the first version of Bott Periodicity: Bott Periodicity for Topological K-theory, and explore its mathematical foundations, properties, theorems, and applications. 

\subsection{Exact Sequences and Splitness}

One of the key properties of the reduced groups $\tilde{K}(X)$ is the existence of exact sequences. Exact sequences are central to understanding the structure of the groups and will be useful in proving the periodicity results in $\mathrm{K}$-theory. We begin with the most general definition of exact sequences.

\begin{definition}[Exact Sequence]
An \textbf{exact sequence} is a sequence of morphisms in an abelian category (whose objects are groups, rings, modules, and so on) where the image of one morphism is equal to the kernel of the next.
\end{definition}

In particular, a \textbf{short exact sequence} is an exact sequence of the form
\[0 \longrightarrow A \stackrel{f}{\longrightarrow} B \stackrel{g}{\longrightarrow} C \longrightarrow 0,\]
where $f: A \rightarrow B$ and $g: B \rightarrow C$. Here our universe is the \textit{abelian category of abelian groups}. By definition, the short sequence is \textit{exact} if $f$ is injective, $g$ is surjective, and $\operatorname{im}(f) = \operatorname{ker}(g)$. We also call this short exact sequence an \textbf{extension} \textit{of} \(C\) \textit{by} \(A\).\footnote{See the following example for the explanation of another convention "extension \textit{of} \(A\) \textit{by} \(C\)".} Moreover, if  \(A \subset Z(B)\), namely \(A\) is included in the center of the group \(B\), we call this sequence a \textbf{central extension}. Let's use the following example about the second K-theory group \(K_{2}(R)\) of \textit{rings}\footnote{Notice in Section 1, the K-theory group we defined is for a compact \textit{Hausdorff} (topological) space $X$ that doesn't necessarily have a \textit{ring} structure.} in algebraic geometry to better understand the intuition behind central extensions.

\begin{example}[Second K-theory Group of Rings]
We define $K_{2}(R):=\operatorname{ker}(\varphi: {St}(R) \rightarrow   GL(R)  )$ for a ring $R$, called the \textbf{second K-theory group}. Here $St(R)$, called the \textbf{(stable) Steinberg group} of $R$ , is defined as the direct limit of  $St_{n}\left( R\right) := \langle x_{ij}\left( r\right) ,1<k,j\leqslant n\textbf{ }|\textbf{ }\mathbf {R}\rangle$, where $r \in R$, and the bold \textbf{R} refers to the Steinberg relations. By definition, we can show $K_{2}(R)=Z(S t(R))$, i.e., $K_{2}(R)$ is precisely the center of $S t(R)$. (See \cite{feng2021zeta} for more details and examples of this group.)

First, in particular, we obtain $K_{2}(R)$ is always an abelian group. Also, by definition, $1 \rightarrow K_{2}(R) \rightarrow S t(R) \rightarrow E(R) \rightarrow 1 $ is a \textit{central extension} of $ E(R)$ by $K_2(R)$; shortly, we say $St(R)$ or $S t(R) \rightarrow E(R)$ (not the whole exact sequence) is a central extension. Here “central'', of course, means the subgroup $K_2(R)$ lies in the center of $St(R)$. As for the phrase “extension of $E(R)$ by $K_2(R)$'', in fact, the idea of “central extension'' is partially motivated and regularly used in considering homotopy classes in homotopical algebra, and it's natural to say a central extension is \textit{of} the “base'' \textit{by} the “fiber''. As a result, we sometimes extend this idea to all extensions in general; that is, in the short exact sequence $0 \rightarrow A \stackrel{f}{\rightarrow} B \stackrel{g}{\rightarrow} C \rightarrow 0$, we may call this sequence an extension \textit{of} $C$ \textit{by} $A$ as defined above. However, the same sequence can also be called an extension \textit{of} $A$ \textit{by} $C$ in the literature, for example, in \cite{rotman2010advanced}. This latter naming is motivated and based on different origins, such as Galois theory.\footnote{Intuitively, in this example of the second K-group, the map $St(R) \to E(R)$ is surjective, namely $St(R)$ is “bigger'' than $E(R)$, so we can call $St(R)$ an extension of $E(R)$. On the other hand, $K_{2}(R) \rightarrow S t(R)$ is injective, so it also makes sense when we call $St(R)$ an extension of $K_{2}(R)$.}

\end{example}

Furthermore, we can give short exact sequences another structure:
\begin{definition} [Splitness]
A short exact sequence
$$
0 \rightarrow A \stackrel{f}{\longrightarrow} B \stackrel{g}{\longrightarrow} C \rightarrow 0
$$
is \textbf{split} if there exists a map $h: C \rightarrow B$ with $gh=1_C$
\end{definition}
We have the following direct consequence from abstract algebra:
\begin{proposition}\label{p.split}
If an exact sequence
$$
0 \rightarrow A \stackrel{f}{\longrightarrow} B \stackrel{g}{\longrightarrow} C \rightarrow 0
$$
is split, then $B \approx A \oplus C$.
\end{proposition}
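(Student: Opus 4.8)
The plan is to use the splitting map $h\colon C\to B$ to build one explicit homomorphism $A\oplus C\to B$ and then show it is bijective purely by diagram chasing, invoking exactness at each spot. Concretely, I would set
\[
\varphi\colon A\oplus C\to B,\qquad \varphi(a,c)=f(a)+h(c).
\]
This is a group homomorphism: since $A\oplus C$ has coordinatewise addition, $f$ and $h$ are homomorphisms, and $B$ is abelian (so the two summands may be regrouped freely), one checks $\varphi\big((a_1,c_1)+(a_2,c_2)\big)=\varphi(a_1,c_1)+\varphi(a_2,c_2)$ directly. The whole proof then reduces to verifying that $\varphi$ is injective and surjective.

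For surjectivity, take an arbitrary $b\in B$ and form $b'=b-h(g(b))$. Applying $g$ and using $gh=1_C$ gives $g(b')=g(b)-g(b)=0$, so $b'\in\ker g$. By exactness at $B$ we have $\ker g=\operatorname{im} f$, hence $b'=f(a)$ for some $a\in A$, and therefore $b=f(a)+h(g(b))=\varphi\big(a,\,g(b)\big)$. For injectivity, suppose $\varphi(a,c)=f(a)+h(c)=0$; applying $g$, using $gf=0$ (which follows from $\operatorname{im} f=\ker g$) and $gh=1_C$, yields $c=0$, whence $f(a)=0$, and since $f$ is injective (exactness at $A$) we get $a=0$. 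Thus $\ker\varphi=0$, so $\varphi$ is an isomorphism and $B\approx A\oplus C$.

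There is no genuine obstacle here — this is the classical splitting lemma for abelian groups and every step is a one-line chase — so the "hard part" is really just bookkeeping: making sure each appeal to exactness is the right one ($f$ injective at $A$, $\operatorname{im} f=\ker g$ at $B$), and noting that the argument uses only that $h$ is a homomorphism with $gh=1_C$, not a two-sided splitting. I would also remark in passing that the statement is symmetric: a left splitting $k\colon B\to A$ with $kf=1_A$ would give the same conclusion via $b\mapsto(k(b),g(b))$. But since the hypothesis we are handed is the right splitting, constructing $\varphi$ as above is the cleanest route to write up.
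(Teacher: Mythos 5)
Your proof is correct and is exactly the standard splitting-lemma argument for abelian groups that the paper invokes without proof (it states the proposition as ``a direct consequence from abstract algebra''). The map $\varphi(a,c)=f(a)+h(c)$, the surjectivity chase via $b-h(g(b))\in\ker g=\operatorname{im} f$, and the injectivity chase via applying $g$ and then the injectivity of $f$ are all accurate, so your write-up in fact supplies the detail the paper omits.
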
 

\subsection{Long Exact Sequences in K-theory}
Motivated by the example in the last subsection and the general notion of a sequence being exact, it's natural to form the following definition of exact sequences of homomorphisms as a particular case:

\begin{definition}[Exact Sequence of Homomorphisms]
A sequence of homomorphisms $G_{1} \rightarrow G_{2} \rightarrow \cdots \rightarrow G_{n}$ is said to be \textbf{exact} if at each intermediate group $G_{i}$, the kernel of the outgoing map equals the image of the incoming map.
\end{definition}

With this definition in mind, now it's time to dive into the following main property:

\begin{proposition}[Exact Sequence Theorem for Compact Hausdorff Spaces] \label{P2.9}
If $X$ is compact Hausdorff and $A \subset X$ is a closed subspace, then the inclusion and quotient maps $A \stackrel{i}{\longrightarrow} X \stackrel{q}{\longrightarrow} X / A$ induce an \textit{exact} sequence of homomorphisms $\tilde{K}(X / A) \stackrel{q^{*}}{\longrightarrow}$ $\tilde{K}(X) \stackrel{i^{*}}{\longrightarrow} \widetilde{K}(A)$. That is, the inclusion map $i: A \hookrightarrow X$ and quotient map $q: X \to X/A$ induce homomorphisms $q^*: \tilde{K}(X/A) \to \tilde{K}(X)$ and $i^*: \tilde{K}(X) \to \tilde{K}(A)$ such that $\mathrm{Ker}(i^*) = \mathrm{Im}(q^*)$. 
\end{proposition}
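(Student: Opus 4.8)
\section*{Proof proposal}

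The plan is to establish the two inclusions $\operatorname{Im}(q^*) \subseteq \operatorname{Ker}(i^*)$ and $\operatorname{Ker}(i^*) \subseteq \operatorname{Im}(q^*)$ separately, the first being formal and the second being the substantive part. First I would recall that the composition $A \stackrel{i}{\to} X \stackrel{q}{\to} X/A$ is, up to homotopy, the constant map: it factors through the point $A/A \subset X/A$. Since pullback of bundles is functorial and homotopy-invariant, the induced composite $q^* \circ i^*$ (careful with variance: we want $i^* q^*$) kills everything, because a bundle pulled back from a point is trivial and hence zero in $\widetilde{K}$. Concretely, $qi$ sends all of $A$ to the basepoint of $X/A$, so $(qi)^* = i^* q^* = 0$, which gives $\operatorname{Im}(q^*) \subseteq \operatorname{Ker}(i^*)$.

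For the reverse inclusion, suppose $\xi \in \widetilde{K}(X)$ with $i^*\xi = 0$ in $\widetilde{K}(A)$. Unwinding the definition of $\widetilde{K}$ as stable isomorphism classes, I may represent $\xi$ by an actual vector bundle $E \to X$, and $i^*\xi = 0$ means the restriction $E|_A$ is stably trivial; after adding a trivial bundle to $E$ (which does not change the class $\xi$), I may assume $E|_A$ is genuinely trivial, i.e., there is an isomorphism $E|_A \approx A \times \mathbb{R}^n$ (or $\mathbb{C}^n$ in the complex case). The key geometric step is then: a trivialization of $E$ over the closed subspace $A$ lets us collapse $A$ and descend $E$ to a bundle $\overline{E}$ on $X/A$ whose pullback under $q$ is $E$. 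This is precisely the clutching-type construction — over $X/A$ one takes $\overline{E} = E / {\sim}$ where the chosen trivialization identifies the fibers over $A$ with a single copy of $\mathbb{R}^n$ sitting over the collapsed point; one must check this quotient is again locally trivial near the image of $A$, which is where compact Hausdorff and the closedness of $A$ are used (to extend the trivialization to a neighborhood, via a Tietze/partition-of-unity argument, so the bundle looks standard on a neighborhood of the collapsed point). Granting that, $q^*[\overline{E}] = [E] = \xi$, so $\xi \in \operatorname{Im}(q^*)$.

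The main obstacle is this descent construction: showing that $\overline{E} \to X/A$ is an honest vector bundle, not merely a family of vector spaces, at the single point $A/A$. Away from that point $q$ is a homeomorphism so there is nothing to prove; the whole content is local triviality at the collapsed point. I would handle it by first upgrading the trivialization of $E|_A$ to a trivialization of $E$ over an open neighborhood $U \supseteq A$ — possible because $X$ is compact Hausdorff (hence normal), $A$ is closed, and one can patch local trivializations with a partition of unity, or invoke the standard fact (Hatcher, \cite{hatcher2003vector}, Section 1.2) that bundles are trivial over a neighborhood of a closed set on which they are trivial. Once $E$ is trivial over $U$, the image of $U$ in $X/A$ is a neighborhood of the collapsed point over which $\overline{E}$ is manifestly trivial, completing the argument. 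A final bookkeeping remark: since the statement is about the reduced groups, one should note that the constructions respect basepoints (take the collapsed point of $X/A$ as basepoint) and that passing between "stably trivial" and "trivial" only costs adding a trivial summand, which is invisible in $\widetilde{K}$ — so no generality is lost in the representative-chasing above.
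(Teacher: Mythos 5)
Your proposal is correct and follows essentially the same route as the paper's own sketch: the easy inclusion via $qi$ factoring through $A/A$ (where $\widetilde{K}$ vanishes), and the hard inclusion via trivializing $E|_A$ stably, extending the trivialization to a neighborhood of $A$ using normality, and descending to a bundle $E/h$ on $X/A$ with $q^*(E/h)\approx E$. Your write-up is in fact more detailed than the paper's sketch at the one genuinely delicate point, namely local triviality of the quotient bundle at the collapsed point.
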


The motivation behind this proposition is to relate the $\mathrm{K}$-theory of $X$, $A$, and $X / A$ through exact sequences. This will help us understand how $\mathrm{K}$-theory behaves under various operations on topological spaces. Below is a sketch of the proof; details can be found in \cite{hatcher2003vector}.

\begin{proof}
Naturally, to show two sets are equal, we need to prove the two directions: showing that $\operatorname{Im} q^{*} \subset \operatorname{Ker} i^{*}$ and $\operatorname{Ker} i^{*} \subset \operatorname{Im} q^{*}$.

For the first part (easier direction), we need to show that $i^{*} q^{*}=0$. This follows directly from the fact that $q i$ is equal to the composition $A \rightarrow A / A \hookrightarrow X / A$, and $\tilde{K}(A / A)=0$.

The second part (harder direction) involves showing that if the restriction over $A$ of a vector bundle $p: E \rightarrow X$ is stably trivial, then there exists a vector bundle over $X / A$ such that $q^{}(E / h) \approx E$. This is done by constructing a quotient space $E / h$ of $E$ under certain identifications and proving that it is a vector bundle. The key observation is that $E$ is trivial over a neighborhood of $A$, and we can use this fact to construct local trivializations over a neighborhood of the point $A / A$. Finally, we easily verify that $E \approx q^{*}(E / h)$.
\end{proof}

Building upon Proposition \ref{P2.9}, we now aim to extend the exact sequence $\tilde{K}(X / A) \rightarrow \tilde{K}(X) \rightarrow \widetilde{K}(A)$ to the left, using a diagram involving cones and suspensions denoted by $C$ and $S$. This extension will provide a deeper understanding of the interrelationships between the $\tilde{K}$ groups of different spaces:
\[\begin{array}{ccc}
A \hookrightarrow X \hookrightarrow X \cup C A \hookrightarrow(X \cup C A) \cup C X \hookrightarrow((X \cup C A) \cup C X) \cup C(X \cup C A) \\

\end{array}\]
In this sequence, each space is obtained from its predecessor by attaching a cone on the subspace two steps back in the sequence. This process illustrates how the spaces are related to each other, with the cone construction allowing us to capture local information about the spaces. 

Moreover, for the last three spaces we have the following quotient maps:
\[\begin{cases}X \cup C A\rightarrow X/A\\ (X \cup C A) \cup C X\rightarrow SA\\ ((X \cup C A) \cup C X) \cup C(X \cup C A)\rightarrow SX\end{cases}\]
These quotient maps are obtained by collapsing the most recently attached cone to a point. Oftentimes, the quotient map collapsing a contractible subspace to a point is a homotopy equivalence, thus inducing an isomorphism on $\tilde{K}$. This observation leads us to the following lemma, which will be a key ingredient in obtaining a long exact sequence of $\tilde{K}$ groups:

\begin{lemma}[Cone Quotient Lemma]\label{l2.10}
Let $A$ be a contractible subspace of $X$. If $q: X \rightarrow X / A$ is the quotient map obtained by collapsing $A$ to a point, then the induced map  $q^{*}: \operatorname{Vect}^{n}(X / A) \rightarrow \operatorname{Vect}^{n}(X)$ is a bijection for all $n \geq 0$.
\end{lemma}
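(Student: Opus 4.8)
The plan is to prove that $q^*:\operatorname{Vect}^n(X/A)\to\operatorname{Vect}^n(X)$ is a bijection by exhibiting an explicit inverse: given a bundle $E\to X$, since $A$ is contractible, the restriction $E|_A$ is trivial, so we can choose a trivialization $h$ of $E$ over $A$ and use it to descend $E$ to a bundle $E/h$ over $X/A$. The key geometric input is that a bundle over a contractible (paracompact) space is trivial, together with the fact that trivializations over a neighborhood of $A$ can be used to perform the quotient construction cleanly. So first I would recall that $E|_A\approx\varepsilon^n$, and moreover that this trivialization extends to a trivialization over some neighborhood $U\supseteq A$ (using the homotopy invariance of pullbacks and compactness/normality of $X$ to get a neighborhood that deformation retracts to $A$, or at least over which $E$ is trivial).

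Next I would define the candidate inverse map. Given $E\to X$ and a choice of trivialization $h: E|_A \xrightarrow{\ \approx\ } A\times\mathbb{R}^n$, form $E/h$ by identifying all points in $p^{-1}(A)$ that have the same image under the composition $p^{-1}(A)\xrightarrow{h} A\times\mathbb{R}^n\to\mathbb{R}^n$ (projection to the fiber coordinate); i.e. collapse $A$ in the base while using $h$ to glue the fibers consistently. One checks that $E/h\to X/A$ is again a vector bundle of dimension $n$: local triviality away from the collapsed point is inherited from $E$, and near the point $A/A$ one uses the trivialization of $E$ over the neighborhood $U$ to produce a local trivialization of $E/h$ over the image of $U$. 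I would then observe, as in the proof sketch of Proposition~\ref{P2.9}, that $q^*(E/h)\approx E$ essentially by construction, which shows $q^*$ is surjective.

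For injectivity, suppose $q^*(F_1)\approx q^*(F_2)$ for bundles $F_1,F_2$ over $X/A$; I want $F_1\approx F_2$. The pullback $q^*F_i$ comes with a canonical trivialization over $A$ (the preimage of the fiber over the collapsed point $A/A$ is literally a product), and an isomorphism $q^*F_1\approx q^*F_2$ can be homotoped, using contractibility of $A$, to one respecting these canonical trivializations over $A$; such an isomorphism then descends to the quotient to give $F_1\approx F_2$. The main obstacle, and the step deserving the most care, is precisely this last point: that an arbitrary isomorphism $q^*F_1\to q^*F_2$ can be deformed to agree with the standard identifications over the contractible set $A$, and that the deformation can be carried out so the result genuinely descends. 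This uses that the space of isomorphisms between two trivial bundles over a contractible space is connected (indeed, maps $A\to GL_n(\mathbb{R})$ are all homotopic since $A$ is contractible) together with the homotopy extension property for the pair $(X,A)$. Finally, one should check well-definedness of $E\mapsto E/h$ up to isomorphism independently of the choice of $h$ — again reducing to connectedness of the relevant gauge group over $A$ — so that the constructed map is a genuine two-sided inverse to $q^*$, completing the proof that $q^*$ is a bijection for all $n\ge 0$.
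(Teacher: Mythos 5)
Your proposal follows essentially the same route as the paper's (sketched) proof: trivialize $E$ over the contractible subspace $A$, extend the trivialization to a neighborhood, form the quotient bundle $E/h \rightarrow X/A$, check that its isomorphism class is independent of $h$, and verify that $E \mapsto E/h$ is a two-sided inverse to $q^{*}$. One small imprecision: since the paper's $\operatorname{Vect}^{n}$ denotes \emph{real} bundles, maps $A \rightarrow GL_{n}(\mathbb{R})$ out of a contractible space are all null-homotopic but not all mutually homotopic ($GL_{n}(\mathbb{R})$ has two path components), so the "gauge group over $A$" is not connected as you assert; the argument still goes through because you only need the comparison map (namely $h_{0}h_{1}^{-1}$, or an isomorphism $q^{*}F_{1} \rightarrow q^{*}F_{2}$ read over $A$) to be homotopic to a \emph{constant} map $\alpha \in GL_{n}(\mathbb{R})$, and a constant automorphism is compatible with the identifications defining the quotient construction, hence still descends to an isomorphism over $X/A$.
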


The Cone Quotient Lemma illustrates the importance of understanding how contractible subspaces relate to the sequence above. By collapsing a contractible subspace to a point, we can obtain useful information about the topological properties of the spaces and their associated vector bundles.

\begin{proof}
The idea of the proof is as follows. We first consider a vector bundle $E \rightarrow X$. It must be trivial over $A$ since $A$ is contractible. By using a trivialization $h$, we can construct a vector bundle $E / h \rightarrow X / A$ and demonstrate that the isomorphism class of $E / h$ does not depend on $h$. Finally, we establish a well-defined map between the vector bundle spaces and show that it is an inverse to $q^{*}$.
\end{proof}

In conclusion, let's combine the results of Lemma \ref{l2.10} and Proposition \ref{P2.9}. That gives us the main theorem in this subsection. 

\begin{theorem}[Long Exact Sequence in K-theory]\label{les}
Let $X$ be a topological space, and let $A \subset X$ be a closed subspace. Then there exists a long exact sequence of K-theory groups:
\begin{equation*}
    \cdots \rightarrow \widetilde{K}(S X) \rightarrow \widetilde{K}(S A) \rightarrow \widetilde{K}(X / A) \rightarrow \widetilde{K}(X) \rightarrow \widetilde{K}(A) \rightarrow \cdots
\end{equation*}
\end{theorem}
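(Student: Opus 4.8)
The plan is to construct the long exact sequence by iterating Proposition \ref{P2.9} along the cofiber sequence $A \hookrightarrow X \hookrightarrow X \cup CA \hookrightarrow (X\cup CA)\cup CX \hookrightarrow \cdots$ described above, and then to translate the terms appearing in that iterated sequence into $\widetilde{K}$ of quotient spaces using the Cone Quotient Lemma. Concretely, Proposition \ref{P2.9} applied to the pair $(X \cup CA,\, X)$ gives exactness of $\widetilde{K}\big((X\cup CA)/X\big) \to \widetilde{K}(X\cup CA) \to \widetilde{K}(X)$; but $(X\cup CA)/X = CA/A = SA$ after collapsing, and more usefully $X \cup CA \simeq X/A$ because $CA$ is contractible, so Lemma \ref{l2.10} (or the homotopy invariance it encodes) identifies $\widetilde{K}(X\cup CA) \approx \widetilde{K}(X/A)$. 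Shifting one more step to the pair $\big((X\cup CA)\cup CX,\, X\cup CA\big)$ and collapsing the contractible cone $CX$ gives a term $\widetilde{K}(SA)$, and the next shift gives $\widetilde{K}(SX)$. Splicing these together yields exactly the asserted sequence.

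First I would set up the Barratt–Puppe-style cofiber sequence and record that each space in it is the mapping cone of the inclusion two terms back, so that every consecutive pair $(Y', Y)$ in the list is of the form (space, closed subspace) with $Y'/Y$ computable by collapsing the most recently attached cone. Second, I would apply Proposition \ref{P2.9} to each such pair to obtain a three-term exact sequence, and then lay these three-term pieces side by side; exactness of the long sequence at each node is immediate because each node is the middle term of one of the three-term pieces and the maps agree. Third, I would invoke Lemma \ref{l2.10} together with the standard fact that collapsing a contractible subspace (here $CA$, $CX$, etc.) is a homotopy equivalence inducing an isomorphism on $\widetilde{K}$, in order to replace the "cone-built" spaces $X\cup CA$, $(X\cup CA)\cup CX$, $\ldots$ by the homotopy-equivalent quotients $X/A$, $SA$, $SX$, $\ldots$. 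One must check that the maps in the sequence are carried to the expected maps $\widetilde{K}(SX)\to\widetilde{K}(SA)$, etc., under these identifications; this is a diagram-chase using naturality of the induced maps $i^*, q^*$ with respect to the homotopy equivalences. Finally, since the construction iterates indefinitely to the left, one obtains the full long exact sequence $\cdots \to \widetilde{K}(SX)\to\widetilde{K}(SA)\to\widetilde{K}(X/A)\to\widetilde{K}(X)\to\widetilde{K}(A)$.

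I expect the main obstacle to be the bookkeeping in the third step: verifying that the homotopy equivalences collapsing the successive cones are compatible with one another and with the inclusion maps, so that the three-term exact sequences genuinely concatenate into one long sequence with the stated maps, rather than merely a zigzag of isomorphisms and exact pieces that happen to share groups. A related subtlety is that Proposition \ref{P2.9} is stated for compact Hausdorff spaces, so to apply it to $X \cup CA$, $SA$, and the higher cones one should note that these remain compact Hausdorff when $X$ and $A$ are (cones and suspensions of compact Hausdorff spaces are compact Hausdorff, and attaching cones along closed subspaces preserves this), and that $A$ and each attached subspace is closed in the next space. Once these point-set and naturality checks are in place, the exactness is formal and the theorem follows.
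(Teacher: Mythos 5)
Your proposal takes essentially the same route as the paper, which likewise builds the iterated mapping-cone sequence $A \hookrightarrow X \hookrightarrow X\cup CA \hookrightarrow (X\cup CA)\cup CX \hookrightarrow \cdots$, applies Proposition \ref{P2.9} to each consecutive pair, and uses Lemma \ref{l2.10} (collapsing the contractible cones) to identify the resulting terms with $\widetilde{K}(X/A)$, $\widetilde{K}(SA)$, $\widetilde{K}(SX)$. Your version is in fact more explicit than the paper's one-sentence proof about the naturality and point-set checks needed to splice the three-term exact pieces into a single long sequence.
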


This theorem is significant in the sense that it provides a way to compute the K-theory groups of more complicated spaces, such as spheres, which is essential for the development of the Bott Periodicity Theorem in the next subsection.

\begin{example}\label{e:wedge_sum}
In particular,  let $X:=A \vee B$, be the \textbf{wedge sum} (or wedge product) of the spaces \(A\) and \(B\). That is, $X$ is formed by identifying a basepoint from $A$ and a basepoint from $B$. 
When we consider the quotient space $X / A$, we are essentially collapsing the entire subspace $A$ to a single point. Since $X$ is formed by identifying a basepoint in $A$ with a basepoint in $B$, collapsing $A$ to a single point also identifies the basepoint of $B$ with this collapsed point. The resulting space is (homeomorphic to) $B$ because it retains the same structure as $B$, except that its basepoint has been identified with the collapsed point. Thus, we have $X / A = B$.

In this case, the sequence in Theorem \ref{les} breaks up into split short exact sequences. By Proposition \ref{p.split}, this implies that we have the following isomorphism $$\widetilde{K}(A \vee B) \approx \widetilde{K}(A) \oplus \widetilde{K}(B)$$ obtained by restriction to $A$ and $B$. This result demonstrates the close relationship between the $\tilde{K}$ groups of $A$, $B$, and $X$ and highlights the additivity property of K-theory for wedge sums.
\end{example}

\subsection{Deducing Periodicity from Exact Sequences}

In this subsection, we aim to finally deduce periodicity properties of the K-theory using the Product Theorem as discussed Section \ref{sec:Fundamental_Product_Theorem}. To achieve this, we will utilize the smash product and reduced external product, ultimately leading us to the Bott Periodicity Theorem.

\begin{definition}[Smash Product]
Given topological spaces $X$ and $Y$ with chosen basepoints $x_{0} \in X$ and $y_{0} \in Y$, the \textbf{smash product} of $X$ and $Y$, denoted $X \wedge Y$, is the quotient space $X \times Y / X \vee Y$, where $X \vee Y := X \times \{y_{0}\} \cup \{x_{0}\} \times Y \subset X \times Y$ is the wedge sum for chosen basepoints $x_0 \in X$ and $y_0 \in Y$ as discussed in Example \ref{e:wedge_sum}.
\end{definition}

The smash product \textit{combines} properties of both $X$ and $Y$, while simplifying their topology by identifying certain parts of the product. This construction will help us in formulating the reduced external product.

\begin{definition}[Reduced Suspension]
Given a space $Z$ with basepoint $z_{0}$, the \textbf{reduced suspension} of $Z$, denoted $\Sigma Z$, is the quotient space of the suspension $SZ$ obtained by collapsing the segment $\{z_{0}\} \times I$ to a point, where $I:= [0, 1]$ is the unit interval.
\end{definition}

The reduced suspension is also a functional construction that preserves many properties of the original space while simplifying the topology. For this concept, we have the following natural (linear) property:

\begin{proposition}[Distributive Property of Reduced Suspension]\label{prop:distributive_reduced_suspension}
Given compact Hausdorff spaces $X$ and $Y$, the reduced suspension distributes over the wedge sum:
\[\Sigma (X\vee Y)=\Sigma X\vee \Sigma Y.\]
\end{proposition}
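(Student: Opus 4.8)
The plan is to unwind the definitions of the reduced suspension and the wedge sum, and exhibit an explicit homeomorphism between the two sides. Recall that $\Sigma Z$ is the quotient $SZ/(\{z_0\}\times I)$, and that $SZ = Z \times I / (Z \times \{0\} \cup Z \times \{1\})$ — equivalently $\Sigma Z$ is the quotient of $Z \times I$ that collapses the subspace $Z\times\{0\} \cup Z\times\{1\} \cup \{z_0\}\times I$ to a single point. So I would first rewrite both $\Sigma(X \vee Y)$ and $\Sigma X \vee \Sigma Y$ as quotients of a common ambient space, namely $(X \vee Y) \times I$, sitting inside $(X \times I) \sqcup (Y \times I)$ glued along $\{x_0\}\times I \sim \{y_0\}\times I$.

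First I would note that $(X \vee Y) \times I = (X \times I) \cup_{\{x_0\}\times I \,=\, \{y_0\}\times I} (Y \times I)$, since the product with the compact space $I$ commutes with this particular gluing (a colimit along closed inclusions). Then $\Sigma(X \vee Y)$ collapses, within this space, the set $(X\vee Y)\times\{0,1\}$ together with $\{*\}\times I$ (where $*$ is the wedge point) to one point. Meanwhile $\Sigma X \vee \Sigma Y$ is obtained by first collapsing $X\times\{0,1\}\cup\{x_0\}\times I$ in $X \times I$ and $Y\times\{0,1\}\cup\{y_0\}\times I$ in $Y\times I$ separately, then identifying the two resulting basepoints. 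The key observation is that in both constructions one collapses exactly the same subset of $(X\times I)\sqcup(Y\times I)$ — namely $X\times\{0,1\} \cup \{x_0\}\times I \cup Y\times\{0,1\}\cup\{y_0\}\times I$, with the strand $\{x_0\}\times I$ already identified with $\{y_0\}\times I$ — to a single point. Since a composite of quotient maps is a quotient map and the order in which one performs identifications that all land in the same equivalence class does not matter, both sides are canonically the quotient of $(X\vee Y)\times I$ by the same equivalence relation, hence naturally homeomorphic. I would record this as the natural map $\Sigma X \vee \Sigma Y \to \Sigma(X\vee Y)$ induced by the inclusions $X \hookrightarrow X\vee Y$, $Y\hookrightarrow X\vee Y$, and check it is a continuous bijection.

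The main obstacle is the point-set bookkeeping: making sure that forming the product $(-)\times I$ genuinely commutes with the wedge (pushout) — this is where compactness of $I$, or at least local compactness, is used so that $(-)\times I$ preserves the relevant quotient/colimit — and that the continuous bijection is actually a homeomorphism. Under the paper's standing hypothesis that $X$ and $Y$ are compact Hausdorff, $\Sigma X \vee \Sigma Y$ is compact and $\Sigma(X\vee Y)$ is Hausdorff, so a continuous bijection between them is automatically a homeomorphism, which disposes of that last point cleanly. The rest is a routine, if slightly tedious, chase through the quotient identifications; I would present it compactly by describing the single equivalence relation on $(X\vee Y)\times I$ and invoking the universal property of quotient maps twice rather than writing formulas for the homeomorphism in coordinates.
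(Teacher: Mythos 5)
Your argument is correct and takes essentially the same route as the paper's (much briefer, purely verbal) proof: both rest on the observation that $\Sigma(X\vee Y)$ and $\Sigma X\vee\Sigma Y$ arise as quotients of $(X\vee Y)\times I$ by collapsing exactly the same subspace $X\times\{0,1\}\cup Y\times\{0,1\}\cup\{*\}\times I$ to a point. You simply supply the point-set details the paper leaves implicit --- that $(-)\times I$ commutes with the wedge pushout because $I$ is locally compact, and that the resulting continuous bijection is a homeomorphism by compactness and Hausdorffness.
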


\begin{proof}
This property can be intuitively understood by definition. The reduced suspension can be viewed as a quotient of the ordinary suspension, collapsing an interval to a point. When applied to the wedge sum, the intervals corresponding to the base points of $X$ and $Y$ are identified, resulting in the wedge sum of the reduced suspensions.
\end{proof}

Now, let us consider the long exact sequence for the pair $(X \times Y, X \vee Y)$:
\begin{equation}\label{eq:long_exact_sequence}
\begin{gathered}
\widetilde{K}(S(X \times Y)) \longrightarrow \widetilde{K}(S(X \vee Y)) \longrightarrow \widetilde{K}(X \wedge Y) \longrightarrow \widetilde{K}(X \times Y) \longrightarrow \widetilde{K}(X \vee Y) 
\end{gathered}
\end{equation}
We can easily observe:
\begin{proposition}\label{th: iso}
For compact Hausdorff spaces $X$ and $Y$ as shown in the sequence above, we have the following isomorphisms:
\begin{enumerate}
    \item (Second) $\widetilde{K}(S(X \vee Y)) \approx \widetilde{K}(S X) \oplus \widetilde{K}(S Y)$.
    \item (Last) $\widetilde{K}(X \vee Y) \approx \widetilde{K}(X) \oplus \widetilde{K}(Y)$.
\end{enumerate}
\end{proposition}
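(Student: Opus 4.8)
The plan is to recognize that both isomorphisms are special cases of the additivity of reduced K-theory over wedge sums, which was already established in Example \ref{e:wedge_sum} as a consequence of the Long Exact Sequence Theorem \ref{les} together with the splitness of the relevant short exact sequences. For the second statement, I would first apply the Distributive Property of Reduced Suspension (Proposition \ref{prop:distributive_reduced_suspension}) to rewrite $\widetilde{K}(S(X \vee Y))$ as $\widetilde{K}(SX \vee SY)$; strictly speaking one should note that the ordinary suspension $S$ and reduced suspension $\Sigma$ induce the same functor on $\widetilde{K}$ of compact Hausdorff spaces (collapsing the contractible segment $\{z_0\}\times I$ is a homotopy equivalence, so Lemma \ref{l2.10} applies), so that $\widetilde{K}(S(X\vee Y)) \approx \widetilde{K}(\Sigma(X \vee Y)) = \widetilde{K}(\Sigma X \vee \Sigma Y) \approx \widetilde{K}(SX \vee SY)$. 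Then I would invoke Example \ref{e:wedge_sum} with the pair $(SX, SY)$ in place of $(A,B)$ to conclude $\widetilde{K}(SX \vee SY) \approx \widetilde{K}(SX) \oplus \widetilde{K}(SY)$.

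For the last statement, the argument is even more direct: it is literally the content of Example \ref{e:wedge_sum} applied to $A := X$ and $B := Y$, giving $\widetilde{K}(X \vee Y) \approx \widetilde{K}(X) \oplus \widetilde{K}(Y)$, with the isomorphism realized by the restriction maps $i_X^*$ and $i_Y^*$ to the two wedge summands. I would spell out briefly why the short exact sequence splits in this setting: the inclusion $X \hookrightarrow X \vee Y$ admits a retraction $X \vee Y \to X$ (collapsing $Y$ to the basepoint), so the map $\widetilde{K}(X \vee Y) \to \widetilde{K}(X)$ has a section, forcing the relevant sequence from Theorem \ref{les} to split; Proposition \ref{p.split} then yields the direct sum decomposition.

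The main obstacle — really the only subtle point — is the interchange between the unreduced suspension $SX$ that appears in the long exact sequence of Theorem \ref{les} and the reduced suspension $\Sigma X$ to which Proposition \ref{prop:distributive_reduced_suspension} applies. One must be careful that for well-pointed compact Hausdorff spaces the quotient map $SX \to \Sigma X$ collapsing a contractible interval is a homotopy equivalence, so it induces an isomorphism on $\widetilde{K}$; this is where Lemma \ref{l2.10} (or the standard fact that collapsing a contractible CW-subcomplex is a homotopy equivalence) does the work. Once that identification is granted, both isomorphisms fall out immediately from the wedge-sum additivity of Example \ref{e:wedge_sum}, and no genuine computation is required; the proof in the paper can reasonably be left as the one-line remark that ``these follow from Example \ref{e:wedge_sum} and Proposition \ref{prop:distributive_reduced_suspension}.''
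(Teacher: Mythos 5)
Your proposal is correct and follows essentially the same route as the paper: the last isomorphism is read off from Example \ref{e:wedge_sum}, and the second is obtained by combining Proposition \ref{prop:distributive_reduced_suspension} with Lemma \ref{l2.10} to pass between the ordinary and reduced suspensions before applying wedge-sum additivity again. Your added remarks (the retraction $X \vee Y \to X$ furnishing the splitting, and the explicit justification that $SZ \to \Sigma Z$ induces an isomorphism on $\widetilde{K}$) make the argument slightly more complete than the paper's, but they do not change the approach.
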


\begin{proof}
2. The last isomorphism is a direct consequence of the Example \ref{e:wedge_sum} in the previous subsection.

1. To prove the first isomorphism, we begin by the reduced suspension distributes over the wedge sum \(\Sigma (X\vee Y)=\Sigma X\vee \Sigma Y\) by Proposition \ref{prop:distributive_reduced_suspension}.
Now, let us consider the K-theory of both sides of this equation; we have
\[\widetilde{K}(\Sigma (X\vee Y)) = \widetilde{K}(\Sigma X\vee \Sigma Y). \]
Applying the K-theory direct sum property ("the last isomorphism" proven above) to the right-hand side, we obtain:
\[\widetilde{K}(\Sigma X\vee \Sigma Y) = \widetilde{K}(\Sigma X) \oplus \widetilde{K}(\Sigma Y). \]
Thus 
\[\widetilde{K}(\Sigma (X\vee Y)) = \widetilde{K}(\Sigma X) \oplus \widetilde{K}(\Sigma Y).\]

By Lemma \ref{l2.10}, we know that the quotient map \(q: \Sigma Z\rightarrow SZ\) induces an isomorphism \(\widetilde{K}(\Sigma Z)\approx \widetilde{K}(SZ)\) for \(Z=X, Y, X\vee Y\). 
In other words, the reduced suspensions induces isomorphisms:
$$\begin{cases}
    \widetilde{K}(\Sigma X)\approx \widetilde{K}(SX)\\
\widetilde{K}(\Sigma Y)\approx \widetilde{K}(SY)\\
\widetilde{K}(\Sigma (X\vee Y)) \approx \widetilde{K}(S(X\vee Y)). 
\end{cases}$$

Hence, we have:

\[\widetilde{K}(S(X\vee Y)) \approx \widetilde{K}(S X) \oplus \widetilde{K}(S Y). \]

This establishes the first isomorphism in this proposition.
\end{proof}

Building upon the previous long exact sequence \eqref{eq:long_exact_sequence} and Proposition \ref{th: iso}, we can easily obtain the following facts:

\begin{proposition}\label{th: split_surjection}
For compact Hausdorff spaces $X$ and $Y$, the last map in the long exact sequence 
for the pair $(X \times Y, X \vee Y)$ is a \textit{split} surjection with splitting $$\widetilde{K}(X) \oplus \widetilde{K}(Y)\rightarrow \widetilde{K}(X \times Y).$$
\end{proposition}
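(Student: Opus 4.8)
The plan is to exhibit an explicit splitting map and then invoke the long exact sequence \eqref{eq:long_exact_sequence} to conclude exactness forces surjectivity. First I would produce the candidate splitting $\widetilde{K}(X) \oplus \widetilde{K}(Y) \to \widetilde{K}(X \times Y)$ as follows: the two projections $p_X : X \times Y \to X$ and $p_Y : X \times Y \to Y$ induce pullback homomorphisms $p_X^* : \widetilde{K}(X) \to \widetilde{K}(X \times Y)$ and $p_Y^* : \widetilde{K}(Y) \to \widetilde{K}(X \times Y)$, and I would set the splitting to be $p_X^* \oplus p_Y^*$, i.e.\ $(a, b) \mapsto p_X^*(a) + p_Y^*(b)$.

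Next I would verify that composing this map with the last map of the long exact sequence, namely the restriction $\widetilde{K}(X \times Y) \to \widetilde{K}(X \vee Y)$, recovers the identification $\widetilde{K}(X \vee Y) \approx \widetilde{K}(X) \oplus \widetilde{K}(Y)$ of Proposition \ref{th: iso}(2). Concretely, the inclusion $X \vee Y \hookrightarrow X \times Y$ composed with $p_X$ collapses the $Y$-wedge-summand to the basepoint and is the identity on $X$ (and symmetrically for $p_Y$); so on the summand $\widetilde{K}(X)$ the composite is $a \mapsto p_X^*(a)|_{X \vee Y}$, which under the splitting $\widetilde{K}(X \vee Y) \approx \widetilde{K}(X) \oplus \widetilde{K}(Y)$ hits $(a, 0)$ (the restriction to the $Y$-factor being a pullback along a constant map, hence zero in reduced K-theory), and symmetrically the $\widetilde{K}(Y)$-summand maps to $(0, b)$. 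Hence the composite $\widetilde{K}(X) \oplus \widetilde{K}(Y) \to \widetilde{K}(X \vee Y) \approx \widetilde{K}(X) \oplus \widetilde{K}(Y)$ is the identity. This shows the last map of \eqref{eq:long_exact_sequence} admits a section, so it is a split surjection, and $p_X^* \oplus p_Y^*$ is the splitting.

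I would also note that surjectivity of the last map then alternatively follows abstractly: a homomorphism admitting a one-sided inverse (section) is automatically surjective; exactness of the sequence is not even needed for surjectivity once the section is in hand, though it is what makes ``split short exact sequence'' the right language, and it is what lets one combine this with Proposition \ref{p.split} to split off $\widetilde{K}(X \vee Y)$ from $\widetilde{K}(X \times Y)$.

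The main obstacle is the bookkeeping in the second paragraph: one must be careful that ``restriction to $X \vee Y$'' really is the last map in the stated long exact sequence (i.e.\ that the quotient-map description $\widetilde{K}(X \times Y / X \vee Y) \to \widetilde{K}(X \times Y) \to \widetilde{K}(X \vee Y)$ from Proposition \ref{P2.9} agrees with the inclusion-induced restriction), and that the identification in Proposition \ref{th: iso}(2) is precisely the one given by restricting to the two wedge summands — otherwise the composite might only be an isomorphism rather than literally the identity. This is routine naturality of pullbacks together with the functoriality built into Proposition \ref{P2.9}, but it is the step where a sign or an off-by-one in which map is ``last'' could go wrong, so I would state it carefully rather than wave at it.
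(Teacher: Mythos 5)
Your proposal is correct and takes essentially the same route as the paper: both use the two projections to define the section $(a,b)\mapsto p_1^*(a)+p_2^*(b)$ from $\widetilde{K}(X)\oplus\widetilde{K}(Y)$ to $\widetilde{K}(X\times Y)$. You are in fact more careful than the paper's own proof, which asserts the splitting without verifying that composing it with restriction to $X\vee Y$ recovers the identity under the identification of Proposition~\ref{th: iso}(2) --- the check you carry out in your second paragraph is exactly the step the paper glosses over.
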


\begin{proof}
By Proposition \ref{th: iso} (Example \ref{e:wedge_sum}), we have established the isomorphism:
\[\widetilde{K}(X \vee Y) \approx \widetilde{K}(X) \oplus \widetilde{K}(Y). \]
Since the last map in the long exact sequence \eqref{eq:long_exact_sequence} is a surjection, there exists a splitting map from $\widetilde{K}(X) \oplus \widetilde{K}(Y)$ to $\widetilde{K}(X \times Y)$ given by $(a, b) \mapsto p_{1}^{*}(a)+p_{2}^{*}(b)$, where $p_{1}$ and $p_{2}$ are the projections of $X \times Y$ onto $X$ and $Y$, respectively. 
\end{proof}

\begin{proposition}\label{th: first_map_split}
For compact Hausdorff spaces $X$ and $Y$, the first map in the long exact sequence for the pair $(X \times Y, X \vee Y)$ \textit{splits}. As a result, we obtain a splitting $$\tilde{K}(X \times Y) \approx \widetilde{K}(X \wedge Y) \oplus \tilde{K}(X) \oplus \tilde{K}(Y).$$
\end{proposition}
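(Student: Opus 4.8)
The plan is to exploit the long exact sequence \eqref{eq:long_exact_sequence} together with the splitting already obtained in Proposition \ref{th: split_surjection}. First I would cut the five-term sequence
\[
\widetilde{K}(S(X \times Y)) \longrightarrow \widetilde{K}(S(X \vee Y)) \stackrel{\alpha}{\longrightarrow} \widetilde{K}(X \wedge Y) \stackrel{\beta}{\longrightarrow} \widetilde{K}(X \times Y) \stackrel{\gamma}{\longrightarrow} \widetilde{K}(X \vee Y)
\]
at the map $\beta$. By Proposition \ref{th: split_surjection}, $\gamma$ is a split surjection, so $\operatorname{Im}(\beta) = \operatorname{Ker}(\gamma)$ is a direct summand of $\widetilde{K}(X \times Y)$, with complementary summand carried isomorphically (via $p_1^* \oplus p_2^*$) onto $\widetilde{K}(X) \oplus \widetilde{K}(Y)$. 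Thus $\widetilde{K}(X \times Y) \approx \operatorname{Ker}(\gamma) \oplus \widetilde{K}(X) \oplus \widetilde{K}(Y)$, and it remains to identify $\operatorname{Ker}(\gamma) = \operatorname{Im}(\beta)$ with $\widetilde{K}(X \wedge Y)$, i.e. to show $\beta$ is injective, equivalently that $\alpha$ is the zero map.

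To see that $\alpha = 0$, I would use naturality. The inclusion $X \vee Y \hookrightarrow X \times Y$ admits the retractions $p_1, p_2 : X \times Y \to X, Y$; dually, the projections $X \vee Y \to X$ and $X \vee Y \to Y$ (collapsing the other wedge summand) are split by the inclusions $X \hookrightarrow X \vee Y \hookleftarrow Y$. Applying the suspension functor $S$ and then $\widetilde{K}$, the map $\widetilde{K}(S(X\times Y)) \to \widetilde{K}(S(X\vee Y))$ fits into a commuting diagram with the corresponding maps for the pairs $(X, \mathrm{pt})$ and $(Y, \mathrm{pt})$, under the splittings of Proposition \ref{th: iso}. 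Since for a single space $Z$ the relevant connecting map $\widetilde{K}(SZ) \to \widetilde{K}(S\,\mathrm{pt}) = 0$ is trivially zero, and since $\widetilde{K}(S(X\vee Y)) \approx \widetilde{K}(SX)\oplus\widetilde{K}(SY)$ splits compatibly, the map $\alpha$ is identified with the zero map on each summand, hence $\alpha = 0$. (Alternatively, one can observe directly that $X \vee Y \hookrightarrow X \times Y$ is a split injection, so $\widetilde{K}(X\times Y) \to \widetilde{K}(X\vee Y)$ is a split surjection — which is Proposition \ref{th: split_surjection} — and then the exactness at $\widetilde{K}(S(X\vee Y))$ forces the image of $\alpha$ to be the kernel of a map out of a summand that is itself split off, giving $\alpha = 0$.)

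With $\alpha = 0$ established, exactness at $\widetilde{K}(X \wedge Y)$ gives $\operatorname{Ker}(\beta) = \operatorname{Im}(\alpha) = 0$, so $\beta$ is injective and $\beta$ identifies $\widetilde{K}(X \wedge Y)$ with the summand $\operatorname{Ker}(\gamma)$ of $\widetilde{K}(X \times Y)$. Combining with the previous paragraph yields
\[
\widetilde{K}(X \times Y) \approx \widetilde{K}(X \wedge Y) \oplus \widetilde{K}(X) \oplus \widetilde{K}(Y),
\]
which is the assertion. The phrase "the first map splits" in the statement then just records that $\widetilde{K}(X\wedge Y) \xrightarrow{\beta} \widetilde{K}(X\times Y)$ has a left inverse, which follows formally from it being a split injection onto a direct summand.

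The main obstacle is the bookkeeping in the naturality argument: one must set up the maps of pairs $(X \times Y, X \vee Y) \to (X, \mathrm{pt})$ etc.\ carefully and check that the splittings of Proposition \ref{th: iso} are compatible with the induced maps on the whole long exact sequence, so that $\alpha$ genuinely decomposes as a direct sum of zero maps rather than merely having image in some ambiguous subgroup. Once that compatibility is in hand, everything else is a formal diagram chase using exactness and Proposition \ref{p.split}.
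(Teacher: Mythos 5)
Your overall skeleton is the right one and matches the paper's: reduce everything to showing that the connecting map $\alpha\colon \widetilde{K}(S(X\vee Y))\to\widetilde{K}(X\wedge Y)$ vanishes, conclude that $\widetilde{K}(X\wedge Y)\to\widetilde{K}(X\times Y)$ is injective with image $\ker(\gamma)$, and then split off $\widetilde{K}(X)\oplus\widetilde{K}(Y)$ using Proposition \ref{th: split_surjection} and Proposition \ref{p.split}. The gap is in how you justify $\alpha=0$. Your naturality argument needs maps of pairs $(X\times Y,\,X\vee Y)\to(X,\mathrm{pt})$ realizing the projections, but $p_1$ carries $X\vee Y$ onto all of $X$ rather than to the basepoint, so no such map of pairs exists and the commuting ladder you invoke is not available. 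The parenthetical alternative is also off on two counts: $X\vee Y\hookrightarrow X\times Y$ is \emph{not} a split injection of spaces (there is in general no retraction $X\times Y\to X\vee Y$ --- consider the torus and $S^1\vee S^1$); the splitting of Proposition \ref{th: split_surjection} exists only at the level of $\widetilde{K}$, via $(a,b)\mapsto p_1^{*}(a)+p_2^{*}(b)$. Moreover, the split surjectivity of $\gamma$ concerns exactness at $\widetilde{K}(X\times Y)$ and says nothing about exactness at $\widetilde{K}(S(X\vee Y))$, which is where $\alpha$ must be killed.

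The correct argument --- and the one the paper gives in a single line --- is to run the mechanism of Proposition \ref{th: split_surjection} one step further to the left: the first map $\widetilde{K}(S(X\times Y))\to\widetilde{K}(S(X\vee Y))\approx\widetilde{K}(SX)\oplus\widetilde{K}(SY)$ is split surjective via $(Sp_1)^{*}+(Sp_2)^{*}$, because the composite $SX\hookrightarrow S(X\vee Y)\hookrightarrow S(X\times Y)\xrightarrow{Sp_1}SX$ is the identity while the corresponding composite through $Sp_2$ is constant (and symmetrically for $SY$). Surjectivity of this first map together with exactness at $\widetilde{K}(S(X\vee Y))$ gives $\ker\alpha=\operatorname{im}(\text{first map})=\widetilde{K}(S(X\vee Y))$, i.e.\ $\alpha=0$. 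With that substitution for your naturality paragraph, the rest of your write-up goes through verbatim.
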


\begin{proof}
Continuing from the last proof of Proposition \ref{th: split_surjection}, we know that the first map in the long exact sequence splits via $\left(S p_{1}\right)^{*}+\left(S p_{2}\right)^{*}$. By Proposition \ref{p.split},
this splitting implies that $\tilde{K}(X \times Y)$ is isomorphic to the direct sum $\widetilde{K}(X \wedge Y) \oplus \tilde{K}(X) \oplus \tilde{K}(Y)$. 
\end{proof}

\begin{proposition}\label{th: external_product}
For $a \in \widetilde{K}(X)$ and $b \in \widetilde{K}(Y)$, the external product $a * b$ lies in $\tilde{K}(X \times Y)$ and induces a unique element in $\widetilde{K}(X \wedge Y)$, defining a reduced external product $\tilde{K}(X) \otimes \tilde{K}(Y) \rightarrow \widetilde{K}(X \wedge Y)$.
\end{proposition}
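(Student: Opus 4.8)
The plan is to deduce everything from the long exact sequence \eqref{eq:long_exact_sequence} for the pair $(X \times Y, X \vee Y)$ together with the splitting furnished by Proposition \ref{th: first_map_split}. Throughout I would use the standard identification, for a based compact Hausdorff space $Z$ with basepoint $z_0$, of $\widetilde{K}(Z)$ with the kernel of the restriction homomorphism $K(Z) \to K(z_0)$, so that the (unreduced) external product $K(X) \otimes K(Y) \to K(X \times Y)$, $a \otimes b \mapsto a * b := p_1^*(a) \cdot p_2^*(b)$, may be applied to reduced classes $a \in \widetilde{K}(X)$, $b \in \widetilde{K}(Y)$ and the outcome compared against the basepoint $(x_0, y_0) \in X \times Y$.

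First I would show that $a * b$ not only lies in $\widetilde{K}(X \times Y)$ but in fact restricts to $0$ in $\widetilde{K}(X \vee Y)$. The composite $X \times \{y_0\} \hookrightarrow X \times Y \xrightarrow{p_2} Y$ is the constant map at $y_0$; since $b \in \widetilde{K}(Y)$ has trivial restriction to $y_0$, naturality of pullback shows $p_2^*(b)$, and hence $a * b = p_1^*(a) \cdot p_2^*(b)$, restricts to $0$ on $X \times \{y_0\} \approx X$. Symmetrically $a * b$ restricts to $0$ on $\{x_0\} \times Y \approx Y$. Under the isomorphism $\widetilde{K}(X \vee Y) \approx \widetilde{K}(X) \oplus \widetilde{K}(Y)$ of Proposition \ref{th: iso} (given by restriction to the two wedge factors), this is precisely the statement that the image of $a * b$ in $\widetilde{K}(X \vee Y)$ vanishes; in particular $a*b$ restricts to $0$ at $(x_0, y_0)$, so $a * b \in \widetilde{K}(X \times Y)$ to begin with.

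Next I would invoke exactness of \eqref{eq:long_exact_sequence} at the term $\widetilde{K}(X \times Y)$. Since $X \times Y / (X \vee Y) = X \wedge Y$ and $a * b$ lies in the kernel of $\widetilde{K}(X \times Y) \to \widetilde{K}(X \vee Y)$, there is a class $c \in \widetilde{K}(X \wedge Y)$ with $q^*(c) = a * b$, where $q : X \times Y \to X \wedge Y$ is the collapse map. Uniqueness of $c$ is exactly where Proposition \ref{th: first_map_split} is used: the splitting it provides, $\widetilde{K}(X \times Y) \approx \widetilde{K}(X \wedge Y) \oplus \widetilde{K}(X) \oplus \widetilde{K}(Y)$, exhibits $q^* : \widetilde{K}(X \wedge Y) \to \widetilde{K}(X \times Y)$ as a split injection, so $c$ is the unique preimage of $a * b$. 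Denoting this unique $c$ again by $a * b$, we obtain a well-defined function $\widetilde{K}(X) \times \widetilde{K}(Y) \to \widetilde{K}(X \wedge Y)$.

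Finally I would check bilinearity and pass to the tensor product. Bilinearity is inherited from that of the unreduced external product: if $c, c'$ are the unique preimages of $a * b$ and $a' * b$, then $q^*(c + c') = a * b + a' * b = (a + a') * b$, so by uniqueness $c + c'$ is the preimage of $(a + a') * b$, and likewise in the second variable and for integer scalars. Hence the assignment factors through $\widetilde{K}(X) \otimes \widetilde{K}(Y)$, giving the desired reduced external product $\widetilde{K}(X) \otimes \widetilde{K}(Y) \to \widetilde{K}(X \wedge Y)$. I expect the main obstacle to be the careful bookkeeping in the second step — verifying via naturality of the external product under pullback (restriction along inclusions, composition with projections, and vanishing of pullbacks of reduced classes along constant maps) that $a * b$ genuinely dies on $X \vee Y$ — together with being scrupulous about the identification $\widetilde{K}(-) = \ker\!\big(K(-) \to K(\mathrm{pt})\big)$ that makes the expression $a * b$ meaningful for reduced classes in the first place.
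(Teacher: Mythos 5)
Your proposal is correct and follows essentially the same route as the paper: show $a*b = p_1^*(a)\,p_2^*(b)$ restricts to zero on $X \vee Y$ because each factor dies on one wedge summand, then use exactness of the sequence for $(X \times Y, X \vee Y)$ for existence of a preimage in $\widetilde{K}(X \wedge Y)$ and the splitting of Proposition \ref{th: first_map_split} for its uniqueness. Your added verification of bilinearity (so the map factors through the tensor product) is a detail the paper leaves implicit, but it is not a different method.
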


\begin{proof}
Recall that $p_1^{*}(a)$ restricts to zero in $K(Y)$ and $p_2^{}(b)$ restricts to zero in $K(X)$. As a result, $p_1^{*}(a) p_2^{*}(b)$ restricts to zero in both $K(X)$ and $K(Y)$, and therefore in $K(X \vee Y)$. Consequently, $a * b$ lies in $\tilde{K}(X \times Y)$, and from the short exact sequence given by Proposition \ref{th: first_map_split}, $a * b$ pulls back to a unique element of $\widetilde{K}(X \wedge Y)$. This defines the reduced external product.
\end{proof}

Now combining these propositions above, we obtain the following commutative diagram:
\[\begin{array}{ccc} K(X) \otimes K(Y) & \approx(\widetilde{K}(X) \otimes \widetilde{K}(Y)) \oplus \widetilde{K}(X) \oplus \widetilde{K}(Y) \oplus \mathbb{Z} \\ \downarrow & \downarrow  \\ K(X \times Y) & \approx \widetilde{K}(X \wedge Y) \oplus \widetilde{K}(X) \oplus \widetilde{K}(Y) \oplus \mathbb{Z} \end{array}\]

Since the reduced external product is essentially a restriction of the unreduced external product, it also preserves the ring structure and is therefore also a ring homomorphism. With this in mind, let's use the notation $a * b$ for both reduced and unreduced external products.

From Proposition \ref{th: external_product}, we defined the reduced external product. Next, we will show that this reduced external product gives rise to the homomorphism $\beta$ as described in the following theorem.

\begin{lemma}\label{th: reduced_external_product_suspension}
The reduced external product gives rise to a homomorphism
\[
\beta: \tilde{K}(X) \rightarrow \widetilde{K}\left(S^{2} X\right), \quad \beta(a)=(H-1) * a,
\]
where $H$ is the canonical line bundle over $S^{2}=\mathbb{C P}^{1}$.
\end{lemma}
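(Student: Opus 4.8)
\section*{Proof proposal}

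The plan is to obtain $\beta$ as the reduced external product of Proposition~\ref{th: external_product} paired against a fixed class in $\widetilde{K}(S^2)$, composed afterwards with the standard identification of $X\wedge S^2$ with the double suspension $S^2X$. As a preliminary step I would check that $H-1$ is a legitimate element of $\widetilde{K}(S^2)$: viewing $\widetilde{K}(S^2)$ as the kernel of the restriction $K(S^2)\to K(x_0)\approx\mathbb{Z}$ to the basepoint, the virtual bundle $H-1$ has dimension $1-1=0$, hence lies in this kernel; by Corollary~\ref{cor:fpt} it is in fact a generator, though here only its membership is needed.

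Next I would instantiate Proposition~\ref{th: external_product} with the second space taken to be $S^2$, which gives the reduced external product $\widetilde{K}(X)\otimes\widetilde{K}(S^2)\to\widetilde{K}(X\wedge S^2)$. Restricting this biadditive pairing to the fixed element $H-1$ in the second factor produces the assignment $a\mapsto (H-1)*a\in\widetilde{K}(X\wedge S^2)$, where I use the evident symmetry $X\wedge S^2\approx S^2\wedge X$ of the external product to write the factors in the stated order. This assignment is additive, because a biadditive map is additive in each variable separately; so the homomorphism property needs no further argument at this stage.

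It then remains to identify the target $\widetilde{K}(X\wedge S^2)$ with $\widetilde{K}(S^2X)$. Using $S^2\approx S^1\wedge S^1$ together with associativity of the smash product for compact Hausdorff spaces, one has $X\wedge S^2\approx S^1\wedge(S^1\wedge X)=\Sigma^2X$, the two-fold reduced suspension; and for any space $Z$ the collapse map $SZ\to\Sigma Z$ crushes the arc $\{z_0\}\times I$, which is contractible, so by the Cone Quotient Lemma (Lemma~\ref{l2.10}, applied on $\widetilde{K}$ exactly as in the proof of Proposition~\ref{th: iso}) it induces an isomorphism $\widetilde{K}(\Sigma Z)\approx\widetilde{K}(SZ)$. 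Iterating this isomorphism twice yields $\widetilde{K}(X\wedge S^2)\approx\widetilde{K}(\Sigma^2X)\approx\widetilde{K}(S^2X)$, and composing it with the additive map of the previous paragraph produces the desired homomorphism $\beta(a)=(H-1)*a$.

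I expect the only real friction to be bookkeeping rather than substance: the genuine content has already been packaged into Proposition~\ref{th: external_product} and Lemma~\ref{l2.10}, so what remains is to pin down basepoint conventions (the basepoint of $S^2=\mathbb{CP}^1$ and the point over which $H$ is trivialized) and to make the homeomorphism $X\wedge S^2\approx\Sigma^2X$ precise, including the mild point-set hypotheses under which smash-product associativity holds. None of this is deep, but it is where a careful write-up would spend its effort.
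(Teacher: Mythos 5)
Your proposal is correct and follows essentially the same route as the paper: pair the reduced external product of Proposition~\ref{th: external_product} against the class $H-1\in\widetilde{K}(S^2)$, then identify $\widetilde{K}(S^2\wedge X)$ with $\widetilde{K}(S^2X)$ via the Cone Quotient Lemma (the paper collapses a $2$-disk in $S^2X$ in one step rather than iterating the collapse $SZ\to\Sigma Z$ twice, but this is the same mechanism). Your write-up is in fact somewhat more complete than the paper's, which simply asserts the definition after stating the $S^nX\to S^n\wedge X$ identification.
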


\begin{proof}

Consider the $n$-fold iterated reduced suspension $\Sigma^n X$ of the space $X$. This space is a quotient of the ordinary $n$-fold suspension $S^n X$, which is obtained by collapsing an $n$-disk in $S^n X$ to a point. Consequently, the quotient map $S^n X \rightarrow S^n \wedge X$ is defined. 
By Lemma \ref{l2.10}, we know that this quotient map $S^n X \rightarrow S^n \wedge X$ induces an isomorphism on $\tilde{K}$. In other words, we have an isomorphism between the reduced K-theories of $S^n X$ and $S^n \wedge X$.

Therefore, we can now define the homomorphism $\beta: \tilde{K}(X) \rightarrow \widetilde{K}\left(S^{2} X\right)$.\footnote{The motivation behind defining $\beta$ is to study the relationship between the K-theory of a space and its double suspension.} Specifically, we define $\beta(a):=(H-1) * a$, where $H$ is the canonical line bundle over $S^{2}=\mathbb{C P}^{1}$, and the operation $*$ denotes the reduced external product as defined in Proposition \ref{th: external_product}.
\end{proof}

So far, we have analyzed the reduced external product and its relation to the K-theory of these spaces, and shown that it leads to a homomorphism between the reduced K-theories of $X$ and the double suspension of $X$. As we did for deducing the fundamental product theorem, motivated by the previous lemma, it is natural to ask: wouldn't it be great if the \textit{homomorphism} \(\beta: \tilde{K}(X) \rightarrow \widetilde{K}\left(S^{2} X\right)\) was further an \textit{isomorphism}? If that is true, we will obtain a powerful tool to examine and compute reduced K groups since this isomorphism will exhibit a periodic property. This idea leads us to the discovery of Bott Periodicity Theorem, a central result in K-theory and one of our two main results in this paper:

\begin{theorem}[Bott Periodicity Theorem for Topological K-theory]
$\tilde{K}(X)\stackrel{\beta}{\approx}\tilde{K}\left(S^{2} X\right)$ by $\beta(a):=(H-1) * a $, $\forall$ compact Hausdorff $X$. 
That is, the homomorphism $\beta: \tilde{K}(X) \rightarrow \tilde{K}\left(S^{2} X\right)$, defined by $\beta(a)=(H-1) * a$ as in the Lemma above, is an isomorphism for all compact Hausdorff spaces $X$.
\end{theorem}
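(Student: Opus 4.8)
The plan is to deduce the Bott Periodicity Theorem from the Fundamental Product Theorem (Theorem~\ref{th:fpt}) by passing from the unreduced statement about $K(X \times S^2)$ to the reduced statement about $\widetilde{K}(S^2 X)$, using the splitting machinery built up in Propositions~\ref{th: iso}--\ref{th: external_product}. The key point is that $S^2 X$ is, up to a homotopy equivalence inducing an isomorphism on $\widetilde{K}$, the smash product $S^2 \wedge X$, and the reduced external product $\widetilde{K}(S^2) \otimes \widetilde{K}(X) \to \widetilde{K}(S^2 \wedge X)$ is precisely the reduced shadow of the external product appearing in $\mu$.

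First I would record the ring-level decomposition coming from the commutative diagram just before Lemma~\ref{th: reduced_external_product_suspension}, applied with $Y = S^2$: since $K(S^2) \approx \widetilde{K}(S^2) \oplus \mathbb{Z}$ with $\widetilde{K}(S^2)$ generated by $H - 1$ and $(H-1)^2 = 0$, the factor $\mathbb{Z}[H]/(H-1)^2$ in the domain of $\mu$ matches $K(S^2)$ via Corollary~\ref{cor:fpt}. Then $K(X) \otimes K(S^2)$ splits as $\bigl(\widetilde{K}(X)\otimes\widetilde{K}(S^2)\bigr) \oplus \widetilde{K}(X) \oplus \widetilde{K}(S^2) \oplus \mathbb{Z}$, and $K(X\times S^2)$ splits as $\widetilde{K}(X \wedge S^2) \oplus \widetilde{K}(X) \oplus \widetilde{K}(S^2) \oplus \mathbb{Z}$ by Proposition~\ref{th: first_map_split}. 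Next I would check that under these two splittings the isomorphism $\mu$ is \emph{compatible} with the direct-sum decompositions: it carries the $\widetilde{K}(X)$, $\widetilde{K}(S^2)$, and $\mathbb{Z}$ summands identically to the corresponding summands of $K(X\times S^2)$ (these come from pullbacks along the projections and the basepoint inclusion, and $\mu$ respects them by naturality of the external product), and consequently $\mu$ must restrict to an isomorphism between the remaining summands, namely $\widetilde{K}(X) \otimes \widetilde{K}(S^2) \xrightarrow{\ \approx\ } \widetilde{K}(X \wedge S^2)$, and this restriction is exactly the reduced external product of Proposition~\ref{th: external_product}.

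Finally I would identify $\widetilde{K}(S^2)$ with $\mathbb{Z}$ via the generator $H - 1$, so the reduced external product becomes a map $\widetilde{K}(X) \cong \mathbb{Z}\langle H-1\rangle \otimes \widetilde{K}(X) \to \widetilde{K}(S^2 \wedge X)$ sending $a \mapsto (H-1)*a$; composing with the isomorphism $\widetilde{K}(S^2\wedge X) \approx \widetilde{K}(S^2 X)$ from Lemma~\ref{l2.10} (the quotient $S^2 X \to S^2 \wedge X$ collapses a contractible disk) yields precisely $\beta$. Hence $\beta$ is an isomorphism.

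The main obstacle will be the bookkeeping in the middle step: verifying that $\mu$ genuinely respects the splitting, so that an isomorphism on the whole direct sum forces an isomorphism on the $\widetilde{K}(X)\otimes\widetilde{K}(S^2)$ summand. This requires that the decompositions of domain and codomain be natural in $X$ and that $\mu$ be built from maps (external product, projection pullbacks) that commute with the reduction-to-$\widetilde{K}$ and the canonical splittings $K(-) \approx \widetilde{K}(-)\oplus\mathbb{Z}$; chasing these compatibilities carefully — rather than the algebra of $\mathbb{Z}[H]/(H-1)^2$, which is trivial — is where the real content lies. A secondary, more routine point is confirming that the disk collapsed to form $S^2\wedge X$ from $S^2 X$ is contractible so that Lemma~\ref{l2.10} applies; this is immediate from the cone structure of the suspension.
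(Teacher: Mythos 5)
Your proposal is correct and follows essentially the same route as the paper: deduce the result from the Fundamental Product Theorem by splitting off the reduced summands of $K(X)\otimes K(S^2)$ and $K(X\times S^2)$, identify the surviving piece of $\mu$ with the reduced external product $\widetilde{K}(X)\otimes\widetilde{K}(S^2)\to\widetilde{K}(X\wedge S^2)$, use $\widetilde{K}(S^2)\approx\mathbb{Z}\langle H-1\rangle$, and pass to $\widetilde{K}(S^2X)$ via Lemma~\ref{l2.10}. If anything, your explicit verification that $\mu$ is compatible with the two direct-sum decompositions is more careful than the paper's corresponding step, where the chain of maps defining $\beta_2$ glosses over exactly this compatibility.
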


\begin{proof}
One way (a sinuous way) to show that $\beta:\widetilde{K}(X) {\rightarrow} \widetilde{K}\left(S^{2} X\right)$ is an isomorphism is to \textit{decompose} $\beta$  into two maps $\beta_1\beta_2$ and then show they are \textit{both} isomorphisms. That is, we write
\[\widetilde{K}(X) \stackrel{\beta_1}{\rightarrow} \widetilde{K}\left(S^{2}\right) \otimes \widetilde{K}(X) \stackrel{\beta_2}{\rightarrow} \widetilde{K}\left(S^{2} X\right).\]

The first map $\beta_1$ sends $a$ to $(H-1) \otimes a$, where $H$ is the canonical line bundle over $S^{2}=\mathbb{C P}^{1}$. This map is an isomorphism because $\tilde{K}\left(S^{2}\right)$ is infinite cyclic generated by $H-1$.

The second map $\beta_2$ is defined as the reduced external product. Since $\widetilde{K}\left(S^{2}\right)$ is generated by $H-1$ as an abelian group and has the relation $(H-1)^{2}=0$, we can rewrite the map $\beta_2$ as a composition:
\[\beta_2: \widetilde{K}(X) \otimes \widetilde{K}\left(S^{2}\right) \rightarrow K(X) \otimes \mathbb{Z}[H] /(H-1)^{2} \rightarrow K\left(X \times S^{2}\right) \rightarrow \widetilde{K}\left(S^{2} X\right).\]
The first map here is an isomorphism due to Corollary \ref{cor:fpt}. The second map here is the homomorphism $\mu$, which is an isomorphism according to Theorem \ref{th:fpt}. The third map here is the natural map induced by the quotient map $X \times S^{2} \rightarrow S^{2} X$ that collapses an $n$-disk in $X \times S^{2}$ to a point. This map induces an isomorphism on $\tilde{K}$. Since all the maps in the composition are isomorphisms, the composition $\beta_2$ is also an isomorphism. Thus, $\beta_2: \widetilde{K}\left(S^{2}\right) \otimes \widetilde{K}(X) \rightarrow \widetilde{K}\left(S^{2} X\right)$ is an isomorphism, as required. 

Therefore, we have concluded that the homomorphism $\beta: \tilde{K}(X) \rightarrow \tilde{K}\left(S^{2} X\right)$, defined by $\beta(a)=(H-1) * a$, is an isomorphism for all compact Hausdorff spaces $X$.
\end{proof}

The Bott Periodicity Theorem is so named due to the periodic behavior it reveals in K-theory. Specifically, it shows that there is a repeating pattern in the K-theory groups as we move through suspensions, with a period of two. In other words, as we see above, the K-theory of a space $X$ and its double suspension $S^2 X$ are isomorphic. This remarkable periodicity property plays a fundamental role in the study of K-theory and has far-reaching consequences in algebraic topology and other areas of mathematics.

This theorem is named after Raoul Bott (1923 – 2005), an American mathematician who first discovered this periodicity phenomenon in the late 1950s in \cite{bott1957stable} and \cite{bott1959stable}. His work was groundbreaking, as it provided a new and powerful tool for studying the homotopy groups of spheres and other topological spaces. Bott's discovery of this periodicity also led to deeper connections between K-theory and other areas of mathematics, such as representation theory and algebraic geometry. His work laid the foundation for subsequent developments in the field, and the Bott Periodicity Theorem remains a cornerstone of K-theory to this day. 
\begin{figure}[htbp]
  \centering
  \includegraphics[width=0.5\textwidth]{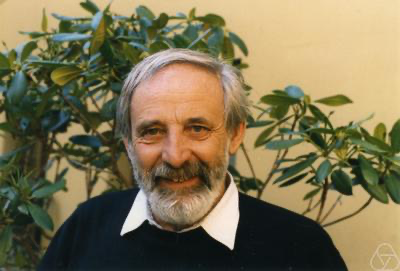}
  \caption{Raoul Bott\protect\footnotemark}
  \label{fig:raoul-bott}
\end{figure}
\footnotetext{Image source: 
\href{https://commons.wikimedia.org/wiki/File:Raoul_Bott_1986.jpeg}{Wikimedia Commons}, 
licensed under 
\href{https://www.gnu.org/licenses/old-licenses/fdl-1.2.html}{GFDL 1.2}.}

As a consequence of the Bott Periodicity Theorem, we obtain the following corollary, which gives us a deeper understanding of the periodicity properties of K-theory and highlights the significance of the smash product and reduced external product in connecting the K-theory of different spaces.

\begin{corollary}
$\widetilde{K}\left(S^{2 n+1}\right)=0$ and $\widetilde{K}\left(S^{2 n}\right) \approx \mathbb{Z}$, generated by the $n$-fold reduced external product $(H-1) * \cdots *(H-1)$.
\end{corollary}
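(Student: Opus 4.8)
The plan is to induct on $n$, using the Bott Periodicity Theorem as the engine that connects consecutive even (and consecutive odd) spheres. The key structural observation is that the $k$-fold iterated reduced suspension $\Sigma^k X$ has the homotopy type of $S^k \wedge X$, and in particular $S^2 S^m \simeq S^{m+2}$ after collapsing a contractible disk; by Lemma~\ref{l2.10} this collapse induces an isomorphism on $\widetilde K$. Hence $\widetilde K(S^{m+2}) \approx \widetilde K(S^2 S^m) \approx \widetilde K(S^m)$ via $\beta$, so the whole computation reduces to knowing $\widetilde K(S^1)$ and $\widetilde K(S^2)$ as base cases.

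First I would record the base cases. For $\widetilde K(S^2)$: by Corollary~\ref{cor:fpt} we have $K(S^2) \approx \mathbb{Z}[H]/(H-1)^2$, which is free of rank $2$ on $\{1, H\}$; regarding $\widetilde K(S^2)$ as $\ker(K(S^2) \to K(x_0))$ identifies it with the ideal generated by $H-1$, which is infinite cyclic on $H-1$, so $\widetilde K(S^2) \approx \mathbb{Z}$. For $\widetilde K(S^1)$: one checks that every complex vector bundle over $S^1$ is trivial (clutching functions $S^0 \to GL_n(\mathbb{C})$ are classified up to homotopy by $\pi_0 GL_n(\mathbb{C}) = 0$ since $GL_n(\mathbb{C})$ is connected), so all bundles are stably trivial and $\widetilde K(S^1) = 0$. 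Alternatively, one can cite $\widetilde K(S^1) \approx \widetilde K(SS^0)$ and note $\widetilde K(S^0)=0$ since $S^0$ is two points with a chosen basepoint, leaving one point, whose reduced $K$-theory vanishes.

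Next I would run the induction. For the odd case, $\widetilde K(S^{2n+1}) \approx \widetilde K(S^2 S^{2n-1}) \approx \widetilde K(S^{2n-1})$ by Bott Periodicity together with the suspension-collapse isomorphism, so by downward induction all odd spheres have $\widetilde K = \widetilde K(S^1) = 0$. For the even case, $\widetilde K(S^{2n}) \approx \widetilde K(S^{2n-2}) \approx \cdots \approx \widetilde K(S^2) \approx \mathbb{Z}$. To track the generator: $\beta(a) = (H-1)*a$, and under the identification of $\widetilde K(S^2 X)$ with $\widetilde K(S^{m+2})$ when $X = S^m$, the image of the generator $(H-1)*\cdots*(H-1)$ ($n-1$ factors) of $\widetilde K(S^{2n-2})$ is $(H-1)*(H-1)*\cdots*(H-1)$ with $n$ factors, which therefore generates $\widetilde K(S^{2n}) \approx \mathbb{Z}$. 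This is just the statement that $\beta$ sends a generator to a generator, iterated.

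The main obstacle is bookkeeping rather than conceptual: one must be careful that the isomorphism $\widetilde K(S^2 S^m) \approx \widetilde K(S^{m+2})$ is genuinely induced by collapsing a contractible (hence, via Lemma~\ref{l2.10}, $\widetilde K$-invariant) subspace, and that under this identification $\beta$ applied to $X = S^m$ really does correspond to left-multiplication by $(H-1)$ in the external product, so that iterating $\beta$ builds up the $n$-fold external product $(H-1)*\cdots*(H-1)$ without introducing spurious units or sign ambiguities. Once that compatibility is pinned down, the corollary follows immediately by induction from the two base-case computations.
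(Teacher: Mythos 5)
Your argument is correct and is exactly the deduction the paper intends: establish the base cases $\widetilde{K}(S^1)=0$ and $\widetilde{K}(S^2)\approx\mathbb{Z}$ (the latter from Corollary~\ref{cor:fpt}), then iterate the Bott isomorphism $\beta$, which sends a generator to a generator and so builds up the $n$-fold product $(H-1)*\cdots*(H-1)$. One warning: your ``alternative'' base-case argument is false --- $\widetilde{K}(S^0)\approx\mathbb{Z}$ (it is $\ker\bigl(K(pt\sqcup pt)\to K(pt)\bigr)$, not the reduced K-theory of a single point), and in any case a \emph{single} suspension does not preserve $\widetilde{K}$, so nothing about $\widetilde{K}(SS^0)$ would follow from $\widetilde{K}(S^0)$; stick with your primary clutching-function argument, which correctly gives $\widetilde{K}(S^1)=0$ from the connectedness of $GL_n(\mathbb{C})$.
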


Notice by this corollary of the Bott Periodicity Theorem, we have obtained a very generalized proposition: the K-theory of odd-dimensional spheres is trivial (i.e., consisting only of the zero element). As well, $\widetilde{K}\left(S^{2 n}\right) \approx \mathbb{Z}$ demonstrates that the K-theory of the \(2n\)-sphere exhibits a periodic structure. 

\begin{example}
Consider the case when $n = 1$. According to the corollary, we have: \(\widetilde{K}\left(S^{2}\right) \approx \mathbb{Z}\) (generated by the reduced external product $(H-1)$), and \(
\widetilde{K}\left(S^{3}\right) = 0\) (which is trivial since 3 is odd). That is, we have easily computed the reduced K-groups of the 2-sphere, $S^2$, and the 3-sphere, $S^3$ by this consequence of the Bott Periodicity Theorem.
\end{example}

\section{Bott Periodicity and Fiber Bundles}

In this section, we explore fiber bundles and their associated properties, ultimately discussing the second version Bott Periodicity: Bott Periodicity for stable homotopy groups of classical groups. This was originally discovered to prove periodicity of homotopy groups for the infinite unitary group.

\subsection{Fiber Bundles and Basic Properties}

At the beginning of the paper, we introduced vector bundles. In fact, a vector bundle is a specific type of a more generalized construction as shown in the following definition:

\begin{definition}[Fiber Bundle] 
A \textbf{fiber bundle} over $B$ is a space $E$ with a map $\pi: E \rightarrow B$ (continuous), satisfying "local triviality": 
$\forall b \in B$, denoting $E_b:=\pi^{-1}(b), \exists$ open $U \ni b$ in $B$ and a map $\left.E\right|_U:=\pi^{-1}(U) \stackrel{t}{\longrightarrow} E_b$ such that the map $E_{U} \stackrel{(\pi, t)=\varphi}{\longrightarrow} U \times E_{b} $ is a homeomorphism. Note that any two fibers of a fiber bundle in the same connected component of $B$ must be homeomorphic.
\end{definition}

In other words, a \textbf{fiber bundle} is a short exact sequence\footnote{In this context,  by Proposition \ref{p.split}, “exact” is used loosely as an intuitive way to capture the idea that locally, 
$E$ behaves like the product $B \times F$} of spaces $F \rightarrow E \stackrel {p}\longrightarrow B$, in which all the subspaces $p^{-1}(b) \subset E$, called \textbf{fibers}, are homeomorphic. The fiber bundle structure is characterized by the projection map $p: E \rightarrow B$. To denote the specific fiber, we also represent a fiber bundle using the notation $F \rightarrow E \rightarrow B$, which forms a \textit{short exact sequence of spaces}. The last space $B$ is called the \textbf{base space} of the bundle, while the middle space $E$ is called the \textbf{total space}. 

Comparing their definitions, we can see a \textit{vector bundle} is a \textit{fiber bundle} where the fibers are vector spaces, and the projection map is compatible with the vector space structure. Fiber bundles can also be thought of as twisted products. Familiar examples of fiber bundles are the \textit{Möbius band} (which is a twisted annulus with line segments as fibers) and the \textit{Klein bottle} (which is a twisted torus with circles as fibers). Fiber bundles provide a rich structure to study topological spaces and their relationships.

\begin{definition}[Homotopy Lifting Property] 
A map $p: E \rightarrow B$ is said to have the \textbf{homotopy lifting property} with respect to a space $X$ if, given a homotopy $g_t: X \rightarrow B$ and a map $\tilde{g}_0: X \rightarrow E$ lifting $g_0$, so that $p \tilde{g}_0 = g_0$, there exists a homotopy $\tilde{g}_t: X \rightarrow E$ lifting $g_t$.
\end{definition}

The homotopy lifting property plays a critical role in studying the relationship between spaces involved in a fiber bundle, as it enables the construction of homotopies between them:

\begin{theorem}[Homotopy Lifting Property Isomorphism Theorem]\label{thm:homotopy_lifting} 
Let $p: E \rightarrow B$ be a map satisfying the \textit{homotopy lifting property} with respect to disks $D^k$ for all $k \geq 0$. Choose basepoints $b_0 \in B$ and $x_0 \in F = p^{-1}(b_0)$. Then, we have the isomorphism $\pi_n(E, F, x_0) \stackrel{p_*}{\approx}\pi_n(B, b_0)$ by the induced map $p_*: \pi_n(E, F, x_0) \rightarrow \pi_n(B, b_0)$ for all $n \geq 1$. Consequently, if $B$ is path-connected, there exists a long exact sequence of homotopy groups:
$$
\cdots \rightarrow \pi_{n}\left(F, x_{0}\right) \rightarrow \pi_{n}\left(E, x_{0}\right) \stackrel{p_{*}}{\rightarrow} \pi_{n}\left(B, b_{0}\right) \rightarrow \pi_{n-1}\left(F, x_{0}\right) \rightarrow \cdots \rightarrow \pi_{0}\left(E, x_{0}\right) \rightarrow 0
$$
\end{theorem}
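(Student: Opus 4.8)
The plan is to prove the Homotopy Lifting Property Isomorphism Theorem in two stages: first establish the relative isomorphism $p_*: \pi_n(E, F, x_0) \xrightarrow{\approx} \pi_n(B, b_0)$, and then feed this into the long exact sequence of the pair $(E, F)$ to collapse it into the desired long exact sequence of homotopy groups. The first stage is the substantive one; the second is essentially formal once the first is in hand.

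For the relative isomorphism, I would argue surjectivity and injectivity separately using the homotopy lifting property with respect to disks. For surjectivity: given a class in $\pi_n(B, b_0)$ represented by a map $f: (D^n, \partial D^n) \to (B, b_0)$, I would regard $D^n$ as $D^{n-1} \times I$ and view $f$ as a homotopy $g_t: D^{n-1} \to B$; starting from the constant lift $\tilde g_0 \equiv x_0$ (which lifts $g_0 \equiv b_0$), the homotopy lifting property for $D^{n-1}$ produces a lift $\tilde g_t: D^{n-1} \to E$, i.e. a map $\tilde f: D^n \to E$ with $p \tilde f = f$, and one checks $\tilde f$ sends $\partial D^n$ appropriately into $F$ (the part $D^{n-1} \times \{0\}$ goes to $x_0$, the rest maps into $F$ since $f$ sends it to $b_0$), giving a relative class mapping onto the given one. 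For injectivity: if $p_*[\tilde f_0] = p_*[\tilde f_1]$ in $\pi_n(B, b_0)$, take a homotopy in $B$ between $p\tilde f_0$ and $p\tilde f_1$ rel $\partial$, which is a map on $D^n \times I$; using the lifting property for $D^n$ and carefully prescribing the lift on the part of the boundary where it is already determined (the two ends $\tilde f_0, \tilde f_1$ and the basepoint faces), lift this homotopy to $E$, obtaining a homotopy of relative maps showing $[\tilde f_0] = [\tilde f_1]$ in $\pi_n(E, F, x_0)$. The bookkeeping here — arranging that the lifting problems really are of the form ``homotopy of maps from a disk'' after the appropriate homeomorphism $D^n \times I \approx D^n \times I$ that moves the prescribed boundary data onto a single face — is the main obstacle, and it is where I would be most careful; the idea is standard (it is the proof that a fibration-like map gives a long exact sequence) but the reparametrization of cubes requires attention.

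Once the relative isomorphism $\pi_n(E, F, x_0) \approx \pi_n(B, b_0)$ is established for all $n \geq 1$, the second stage is immediate. The pair $(E, F)$ always has its long exact sequence of homotopy groups,
\[
\cdots \to \pi_n(F, x_0) \to \pi_n(E, x_0) \to \pi_n(E, F, x_0) \xrightarrow{\partial} \pi_{n-1}(F, x_0) \to \cdots \to \pi_0(E, x_0),
\]
and I would simply substitute $\pi_n(B, b_0)$ for $\pi_n(E, F, x_0)$ everywhere via the isomorphism $p_*$, checking that the map $\pi_n(E, x_0) \to \pi_n(E, F, x_0)$ composed with $p_*$ is the induced map $p_*: \pi_n(E, x_0) \to \pi_n(B, b_0)$ (this is just naturality of the induced maps on homotopy groups) and relabeling the connecting homomorphism accordingly. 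Path-connectedness of $B$ is what makes the tail end terminate cleanly at $\pi_0$ with the expected exactness; I would note that it guarantees $\pi_0(E, x_0) \to \pi_0(B, b_0)$ is onto (or, in the form stated, that the sequence ends at $\pi_0(E,x_0) \to 0$ after identifying the relevant $\pi_0$ terms). I expect no difficulty in this stage beyond carefully matching up the maps.

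In summary: stage one is a direct application of the homotopy lifting property, done twice (surjectivity via lifting an $n$-cube seen as a homotopy of $(n-1)$-cubes, injectivity via lifting an $(n{+}1)$-cube with prescribed boundary), and the only real care needed is the cube reparametrization; stage two is the formal passage through the long exact sequence of the pair. I would present stage one in reasonable detail and stage two briefly, citing the standard long exact sequence of a pair.
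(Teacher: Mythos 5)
Your proposal is correct and follows essentially the same route as the proof the paper defers to (Hatcher, \emph{Algebraic Topology}, Theorem 4.41): establish $p_*:\pi_n(E,F,x_0)\to\pi_n(B,b_0)$ as an isomorphism by lifting cubes, with the key technical device being exactly the homeomorphism of pairs $(D^n\times I,\ D^n\times\{0\})\approx(D^n\times I,\ D^n\times\{0\}\cup\partial D^n\times I)$ that upgrades the disk lifting property to lifting with prescribed boundary data, and then substitute into the long exact sequence of the pair $(E,F)$. The paper itself gives no proof, so there is nothing further to compare.
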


This theorem establishes the connection between the homotopy groups of the spaces involved in a fiber bundle when the homotopy lifting property is satisfied. The proof of the theorem can be found in Chapter 4 of \cite{hatcher2002algebraic}.

With the definition of the homotopy lifting property, we define the term \textbf{fibration} to be a map $p:E\rightarrow B$ that satisfies this homotopy lifting property with respect to all spaces \(X\). In particular, if the map $p$ satisfies the homotopy lifting property \textit{for disks}, then it's usually called a \textbf{Serre fibration}.

\subsection{Stiefel Manifolds in Real, Complex, and Quaternionic Cases}

\begin{definition}[Stiefel Manifold]
A \textbf{Stiefel manifold}, denoted by $V_{n}(\mathbb{R}^{k})$, or $V_{n,k}$, is a space consisting of all orthonormal $k$-frames in $\mathbb{R}^n$; in other words, it is the set of all orthonormal $k$-tuples of vectors in $\mathbb{R}^n$. It is given the subspace topology as a subset of the product of $k$ copies of the $(n-1)$-sphere, $S^{n-1}$.
\end{definition}

By this definition, $V_{n}(\mathbb{R}^{k})$ is a space of orthonormal $k$-tuples of vectors, which can be thought of as a subspace of the product of $k$ copies of the $(n-1)$-sphere. Oftentimes, the Stiefel manifold is used to study various properties of orthonormal frames, and its homotopy and cohomology groups are of particular interest in algebraic topology. In addition, we have the following property (as discussed in Ganatra's \cite{ganatra2023lecture17}):

\begin{proposition}[Compactness of $V_{n}(\mathbb{R}^{k})$]
The Stiefel manifold $V_{n}(\mathbb{R}^{k})$ is a compact manifold. 
\end{proposition}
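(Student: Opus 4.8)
The plan is to exhibit $V_n(\mathbb{R}^k)$ as a closed and bounded subset of a Euclidean space, from which compactness follows by the Heine--Borel theorem; the manifold structure is then a separate (standard) fact that I will indicate but not belabor. First I would observe that an orthonormal $k$-frame $(v_1,\dots,v_k)$ with each $v_i \in \mathbb{R}^n$ is exactly a point of the product $(S^{n-1})^k \subset (\mathbb{R}^n)^k \approx \mathbb{R}^{nk}$, so $V_n(\mathbb{R}^k)$ sits inside a bounded set (each $\lvert v_i\rvert = 1$ forces the whole tuple into a sphere of radius $\sqrt{k}$ in $\mathbb{R}^{nk}$). Boundedness is therefore immediate from the definition.

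Next I would prove closedness. The key point is that the defining conditions are the simultaneous vanishing of finitely many continuous functions: for each pair $(i,j)$ with $1 \le i,j \le k$, the function $(v_1,\dots,v_k) \mapsto \langle v_i, v_j\rangle - \delta_{ij}$ is continuous on $\mathbb{R}^{nk}$ (it is a polynomial in the coordinates), and $V_n(\mathbb{R}^k)$ is precisely the common zero set of these $k^2$ functions. A finite intersection of preimages of the closed set $\{0\}$ under continuous maps is closed, so $V_n(\mathbb{R}^k)$ is closed in $\mathbb{R}^{nk}$. Combining with boundedness and invoking Heine--Borel gives compactness.

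For the manifold claim I would note that $V_n(\mathbb{R}^k)$ is a homogeneous space: the orthogonal group $O(n)$ acts transitively on orthonormal $k$-frames (extend a given frame to an orthonormal basis, then use the change-of-basis element of $O(n)$), with stabilizer of a fixed frame isomorphic to $O(n-k)$ acting on the orthogonal complement. Hence $V_n(\mathbb{R}^k) \approx O(n)/O(n-k)$, a quotient of a compact Lie group by a closed subgroup, which carries a canonical smooth manifold structure of dimension $\dim O(n) - \dim O(n-k)$. Alternatively one can build charts directly via the Gram--Schmidt process applied to nearby $k$-tuples of independent vectors, but the homogeneous-space description is cleaner.

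I do not anticipate a genuine obstacle here: the statement is essentially a packaging of Heine--Borel together with the homogeneous-space structure, and every ingredient is elementary. The only point requiring a little care is making sure the topology asserted in the definition (the subspace topology from $(S^{n-1})^k$) agrees with the subspace topology from $\mathbb{R}^{nk}$ and with the quotient topology from $O(n)$ --- these coincide because $O(n)$ is compact and the orbit map is a continuous bijection from a compact space onto a Hausdorff space, hence a homeomorphism onto its image. I would state that reconciliation in one sentence and consider the proof complete.
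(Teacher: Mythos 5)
Your proof is correct, and your primary route to compactness differs from the paper's. The paper's entire argument is the homogeneous-space identification: $O(n)$ acts transitively on orthonormal $k$-frames with isotropy group $I_k \times O(n-k)$, so $V_k(\mathbb{R}^n) \approx O(n)/(I_k \times O(n-k))$, and compactness follows because this is a continuous image of the compact group $O(n)$. You instead get compactness directly from Heine--Borel, realizing the Stiefel manifold as the common zero set of the $k^2$ polynomial conditions $\langle v_i, v_j\rangle - \delta_{ij} = 0$ inside the bounded set $(S^{n-1})^k$; this is more elementary (no transitivity argument needed for compactness itself) and has the virtue of working verbatim from the definition as a subspace of a product of spheres. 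You then recover the paper's coset description as your argument for the \emph{manifold} structure, which is actually the part the paper's proof leaves implicit --- the paper proves the homeomorphism with $O(n)/(I_k\times O(n-k))$ and stops, whereas you spell out why that quotient is a smooth manifold and why the three candidate topologies (subspace from $(S^{n-1})^k$, subspace from $\mathbb{R}^{nk}$, quotient from $O(n)$) agree via the compact-to-Hausdorff continuous bijection argument. In short: the paper's approach buys compactness and the manifold structure in one stroke from Lie theory, while yours separates the two, making the compactness claim cheaper and the point-set bookkeeping more explicit. Both are complete and standard.
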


\begin{proof}
To start, observe $O(n)$ acts on $V_k\left(\mathbb{R}^n\right)$ by composition: transitive action. Also, the isotropy group of basepoints $\left\{e_{1},..., e_k\right\}$ is $I_k \times O(n-k)$. Using this, we can easily show $V_k\left(\mathbb{R}^n\right)\approx O(n) / (I_k \times O{(n-k)})$. 
\end{proof}

As an example, let us consider the general case of fiber bundles for $m < n \leq k$:
\begin{equation}\label{eq:fb}
V_{n-m}\left(\mathbb{R}^{k-m}\right) \rightarrow V_{n}\left(\mathbb{R}^{k}\right) \stackrel{p}{\longrightarrow} V_{m}\left(\mathbb{R}^{k}\right)
\end{equation}
Here, we have three Stiefel manifolds involved in the fiber bundle. The \textit{base space} is $V_m(\mathbb{R}^k)$, which consists of all orthonormal $m$-frames in $\mathbb{R}^k$. The \textit{total space} is $V_n(\mathbb{R}^k)$, which consists of all orthonormal $n$-frames in $\mathbb{R}^k$. The fiber over a point in the base space is $V_{n-m}(\mathbb{R}^{k-m})$, which consists of all orthonormal $(n-m)$-frames in $\mathbb{R}^{k-m}$. In this fiber bundle, the projection map $p: V_{n}(\mathbb{R}^k) \rightarrow V_{m}(\mathbb{R}^k)$ takes an orthonormal $n$-frame in $\mathbb{R}^k$ and maps it onto an orthonormal $m$-frame in $\mathbb{R}^k$.

First, let's restrict \eqref{eq:fb} to the case $m=1$, so we have bundles \[V_{n-1}\left(\mathbb{R}^{k-1}\right) \rightarrow V_{n}\left(\mathbb{R}^{k}\right) \stackrel{p}{\longrightarrow} V_{1}\left(\mathbb{R}^{k}\right).\]Notice that $V_{1}\left(\mathbb{R}^{k}\right)$ can be identified with the $(k-1)$-dimensional sphere $S^{k-1}$. This is because the 1-frames in $\mathbb{R}^{k}$ are essentially the unit vectors in $\mathbb{R}^{k}$, and the unit vectors form the $(k-1)$-dimensional sphere $S^{k-1}$. Thus, we can rewrite the fiber bundle as  $$V_{n-1}\left(\mathbb{R}^{k-1}\right) \rightarrow V_{n}\left(\mathbb{R}^{k}\right) \stackrel{p}{\longrightarrow} S^{k-1}.$$ That allows us to deduce the connectivity of $V_{n}\left(\mathbb{R}^{k}\right)$ by induction on $n$.

Next, let's (only) restrict \eqref{eq:fb} to the case $k=n$, so we obtain fiber bundles \[V_{n-m}\left(\mathbb{R}^{n-m}\right) \rightarrow V_{n}\left(\mathbb{R}^{n}\right) \stackrel{p}{\longrightarrow} V_{m}\left(\mathbb{R}^{n}\right).\] Recall that $V_{n}\left(\mathbb{R}^{n}\right)$ represents the set of all orthonormal $n$-frames in $\mathbb{R}^{n}$, which can be identified with the orthogonal group $O(n)$. Likewise, the fiber $V_{n-m}\left(\mathbb{R}^{n-m}\right)$ can be identified with the orthogonal group $O(n-m)$ because it represents the set of orthonormal $(n-m)$-frames in $\mathbb{R}^{n-m}$. The projection map $p$ sends an $n$-frame in $O(n)$ to the $m$-frame formed by its first $m$ vectors, and the fibers consist of the cosets $\alpha O(n-m)$ for $\alpha \in O(n)$. Therefore, we can rewrite the fiber bundle as $$O(n-m) \rightarrow O(n) \stackrel{p}{\longrightarrow}  V_{m}\left(\mathbb{R}^{n}\right).$$

Now, let's combine the preceding two cases together: taking $m=1$ \textit{and} $k=n$ to obtain bundles
$$
O(n-1)  \rightarrow O(n) \stackrel{p}{\longrightarrow} S^{n-1}.
$$

Before we further explore these particular bundles, let's analogously define fiber bundles for the \textit{complex} and \textit{quaternionic} cases: 

\begin{definition}[Unitary and Symplectic Groups] 
The \textbf{unitary group} $U(n)$ is the group of $n \times n$ unitary matrices, i.e., complex matrices $U$ such that $U^* U = I$, where $U^*$ is the conjugate transpose of $U$. The \textbf{symplectic group} $Sp(n)$ consists of $2n \times 2n$ matrices $S$ that preserve a skew-symmetric bilinear form, i.e., $S^T JS = J$, where $J$ is a skew-symmetric matrix.
\end{definition}

To understand their fiber bundle structure, we first need to connect the \textit{Stiefel manifolds} to the coset spaces of these groups. The complex and quaternionic versions of Stiefel manifolds can be identified with the coset spaces of the unitary and symplectic groups, respectively. For example, $V_n(\mathbb{C}^k)$ is identifiable with the coset space $U(k) / U(k-n)$, and $V_n(\mathbb{H}^k)$ is identifiable with the coset space $Sp(k) / Sp(k-n)$. 

Now let's consider their corresponding Stiefel manifolds in the complex and quaternionic cases. Employing the same reasoning as above, by restricting to the case $m=1$ and $k=n$, we obtain fiber bundles in the complex and quaternionic cases as follows:
$$
\begin{aligned}
U(n-1) & \rightarrow U(n) \stackrel{p}{\longrightarrow} S^{2 n-1} \\
S p(n-1) & \rightarrow S p(n) \stackrel{p}{\longrightarrow} S^{4 n-1}
\end{aligned}
$$ 
These bundles are analogous to the real case, with the projection map $p$ in each case sending an element of $U(n)$ or $Sp(n)$ to the corresponding coset in the Stiefel manifold, which is homeomorphic to a sphere of dimension $2n-1$ or $4n-1$, respectively. Unfortunately, these bundles also show that computing homotopy groups of $O(n), U(n)$, and $S p(n)$ should be at least as difficult as computing homotopy groups of spheres.

\subsection{Deducing Periodicity from Fiber Bundles}
In the preceding subsection, we have examined the three bundles:
$$
\begin{aligned}
O(n-1) & \rightarrow O(n) \stackrel{p}{\longrightarrow} S^{n-1} \\
U(n-1) & \rightarrow U(n) \stackrel{p}{\longrightarrow} S^{2 n-1} \\
S p(n-1) & \rightarrow S p(n) \stackrel{p}{\longrightarrow} S^{4 n-1}
\end{aligned}
$$
They actually lead to an interesting stability property. In the real case, the inclusion $O(n-1) \hookrightarrow O(n)$ induces an isomorphism on $\pi_{i}$ for $i<n-2$ from the long exact sequence of the first bundle. As a consequence, the groups $\pi_{i} O(n)$ are independent of $n$ if $n$ is sufficiently large. Similarly, the same is true for the groups $\pi_{i} U(n)$ and $\pi_{i} S p(n)$ through the other two bundles. 

With these observations in mind, it is time to conclude one of the most remarkable results in algebraic topology, the \textit{Bott Periodicity Theorem for Stable Homotopy Groups of Classical Groups}, which states that these stable groups repeat periodically, with a period of eight for $O$ and $S p$, and a period of two for $U$. The theorem can be formally stated as follows:

\begin{theorem}[Bott Periodicity for Stable Homotopy Groups of Classical Groups]
The stable homotopy groups of $O(n)$, $U(n)$, and $S p(n)$ exhibit periodic behavior with periods of eight for $O$ and $S p$, and a period of two for $U$. The periodic values are given in the following table:

\[
\begin{tabular}{l|llllllll}
$i \bmod 8$ & \(0\) & \(1\) & \(2\) & \(3\) & \(4\) & \(5\) & \(6\) & \(7\) \\
\hline$\pi_{i} O(n)$ & $\mathbb{Z}_{2}$ & $\mathbb{Z}_{2}$ & \(0\) & $\mathbb{Z}$ & \(0\) & \(0\) & \(0\) & $\mathbb{Z}$ \\
$\pi_{i} U(n)$ & \(0\) & $\mathbb{Z}$ & \(0\) & $\mathbb{Z}$ & \(0\) & $\mathbb{Z}$ & \(0\) & $\mathbb{Z}$ \\
$\pi_{i} S p(n)$ & \(0\) & \(0\) & \(0\) & $\mathbb{Z}$ & $\mathbb{Z}_{2}$ & $\mathbb{Z}_{2}$ & \(0\) & $\mathbb{Z}$
\end{tabular}\]

\end{theorem}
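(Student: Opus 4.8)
The plan is to split the three rows according to the mechanism that produces the periodicity: the unitary row will come from the complex Bott periodicity already proved in K-theory (giving period two), while the orthogonal and symplectic rows will come from Bott's chain of loop-space homotopy equivalences (giving period eight). In each case I first establish the periodicity and then pin down one full period of values.

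\emph{The unitary row.} Since $BU$ classifies reduced complex K-theory, so that $\widetilde{K}(Z)\approx[Z,BU]$ for compact Hausdorff $Z$, and since $U\simeq\Omega BU$, for every $n\geq1$ there is a natural chain $\pi_{n-1}(U)\approx\pi_{n}(BU)\approx[S^{n},BU]\approx\widetilde{K}(S^{n})$. Feeding in the corollary of the Bott Periodicity Theorem for topological K-theory, which gives $\widetilde{K}(S^{2m})\approx\mathbb{Z}$ and $\widetilde{K}(S^{2m+1})=0$, we get $\pi_{2m-1}(U)\approx\mathbb{Z}$ and $\pi_{2m}(U)=0$ for all $m\geq1$, together with $\pi_{0}(U)=0$ from the case $n=1$; this is exactly the $2$-periodic middle row. (As a check on the odd entries one may instead use that $U$ is path-connected and $\det\colon U(n)\to S^{1}$ has the $1$-connected fiber $SU(n)$, so the long exact sequence of Theorem \ref{thm:homotopy_lifting} recovers $\pi_{1}(U)\approx\mathbb{Z}$.)

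\emph{The orthogonal and symplectic rows.} The decisive input here is Bott's chain of weak homotopy equivalences
\[
\Omega(\mathbb{Z}\times BO)\simeq O,\quad \Omega O\simeq O/U,\quad \Omega(O/U)\simeq U/Sp,\quad \Omega(U/Sp)\simeq \mathbb{Z}\times BSp,
\]
\[
\Omega(\mathbb{Z}\times BSp)\simeq Sp,\quad \Omega Sp\simeq Sp/U,\quad \Omega(Sp/U)\simeq U/O,\quad \Omega(U/O)\simeq \mathbb{Z}\times BO,
\]
each proved by Morse theory on the space of paths joining a suitably chosen pair of points in the relevant compact symmetric space: the space of minimal geodesics is identified with the next space in the list, while the non-minimal geodesics all have index growing without bound as the dimension increases, so the inclusion of the minimal-geodesic locus into the path space is arbitrarily highly connected, and passing to the stable limit gives the stated equivalence. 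Composing all eight yields $\Omega^{8}(\mathbb{Z}\times BO)\simeq\mathbb{Z}\times BO$, so $\pi_{i}(O)\approx\pi_{i+1}(\mathbb{Z}\times BO)\approx\pi_{i+9}(\mathbb{Z}\times BO)\approx\pi_{i+8}(O)$, which is the $8$-periodicity of the top row, and the first four equivalences give $\Omega^{4}(\mathbb{Z}\times BO)\simeq\mathbb{Z}\times BSp$, hence $\pi_{i}(Sp)\approx\pi_{i+4}(O)$, reducing the bottom row to the top one. To identify one period of $\pi_{*}(O)$ I would invoke the standard facts $\pi_{0}(O)=\mathbb{Z}_{2}$ (the two components $\det=\pm1$), $\pi_{1}(O)=\pi_{1}(SO)=\mathbb{Z}_{2}$ (the spin double cover), $\pi_{2}(O)=0$ ($\pi_{2}$ of a compact Lie group vanishes), $\pi_{3}(O)=\mathbb{Z}$ ($\pi_{3}$ of a compact simple Lie group), and for $\pi_{4}(O)=\pi_{5}(O)=\pi_{6}(O)=0$ and $\pi_{7}(O)=\mathbb{Z}$ I would climb the fibrations $O(n-1)\to O(n)\to S^{n-1}$ of the previous subsection via Theorem \ref{thm:homotopy_lifting}, inputting low-degree homotopy groups of spheres and cross-checking against $\pi_{i}(Sp)\approx\pi_{i+4}(O)$ and against the auxiliary fibrations $O\to U\to U/O$ and $U\to Sp\to Sp/U$ supplied by the chain.

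\emph{Main obstacle.} The technical heart, and the step I expect to dominate the argument, is the Morse-theoretic proof of the eight loop-space equivalences, and within it the index estimate showing that non-minimal geodesics between the two chosen points contribute only in arbitrarily high degrees; this is precisely what promotes the space of minimal geodesics to a high-connectivity approximation of the loop space and thereby produces the period-eight behavior. By contrast the unitary row is a direct corollary of the K-theory periodicity established above, and fixing the eight base values is finite, low-dimensional bookkeeping with the fibration long exact sequences.
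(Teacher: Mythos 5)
The paper does not actually prove this theorem: it only establishes, via the long exact sequences of the bundles $O(n-1)\to O(n)\to S^{n-1}$ and their complex and quaternionic analogues, that the groups $\pi_i O(n)$, $\pi_i U(n)$, $\pi_i Sp(n)$ stabilize, and then states the periodicity table without argument. Your sketch therefore supplies something the paper omits, and its architecture is the correct, standard one: the $U$ row as a corollary of the K-theoretic periodicity already proved in Section 2, and the $O$ and $Sp$ rows from Bott's chain of eight loop-space equivalences established by Morse theory on path spaces of compact symmetric spaces (the index estimate for non-minimal geodesics being, as you say, the real content). Your consistency checks are right: $\Omega^{8}(\mathbb{Z}\times BO)\simeq\mathbb{Z}\times BO$ gives the period eight, and $\pi_i(Sp)\approx\pi_{i+4}(O)$ does match the two rows of the table.

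Two caveats. First, the identification $\pi_{n-1}(U)\approx\widetilde{K}(S^{n})$ rests on the representability of reduced K-theory by $\mathbb{Z}\times BU$ together with $U\simeq\Omega BU$; none of this is available from the paper's bundle-theoretic definition of $\widetilde{K}$, so it is a genuine extra input (classification of stable bundles over compact spaces by maps into Grassmannians), not a formal consequence of the earlier sections. Second, your plan to extract $\pi_4(O),\dots,\pi_7(O)$ by climbing the fibrations $O(n-1)\to O(n)\to S^{n-1}$ underestimates the difficulty: those computations need unstable homotopy groups of spheres and delicate boundary maps, and historically were not accessible this way. Once the eight equivalences are in hand, the clean route is to note that $\pi_i(O)\approx\pi_0$ of the $i$-th space in your chain, i.e.\ $\pi_0$ of $O$, $O/U$, $U/Sp$, $\mathbb{Z}\times BSp$, $Sp$, $Sp/U$, $U/O$, $\mathbb{Z}\times BO$, which reads off as $\mathbb{Z}_2,\mathbb{Z}_2,0,\mathbb{Z},0,0,0,\mathbb{Z}$ with no sphere computations at all. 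With that adjustment your outline is a faithful skeleton of the actual proof.
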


This theorem not only highlights a deep connection between the homotopy groups of the orthogonal, unitary, and symplectic groups, but, most importantly, reveals an unexpected \textit{periodic} structure in these groups. As the Bott Periodicity Theorem for Topological K-theory, this version of Bott Periodicity Theorem also has far-reaching applications in algebraic topology, and it enables us to easily classify homotopy groups of $O(n)$, $U(n)$, and $S p(n)$.

\begin{example}
Consider the stable homotopy groups of the unitary group $U(n)$. According to the Bott Periodicity Theorem above, these groups have a period of two. This means that the homotopy groups $\pi_i U(n)$ and $\pi_{i+2} U(n)$ are isomorphic for all $i$. Namely, $\pi_i U(n) \approx \pi_{i+2} U(n)$ for all $i$. This shows a periodic behavior for the homotopy groups $\pi_i U(n)$ $\forall i$.
\end{example}

\section{Conclusion}
\label{sec:conclusion}

In the previous two sections, we have delved into two vibrant versions of Bott Periodicity that exhibit periodic behavior: the Bott Periodicity Theorem for \textit{topological K-theory}, and the Bott Periodicity Theorem for \textit{stable homotopy groups of classical groups} (orthogonal, unitary, and symplectic groups). The first version establishes an isomorphism between the reduced K-theories of a compact Hausdorff space and its double suspension, emphasizing the algebraic and geometric properties of vector bundles over topological spaces; whereas the second version reveals periodic patterns in stable homotopy groups, providing valuable insights into the global structure of these groups.

Although these two versions of Bott Periodicity focus on different aspects of topology, they are deeply interconnected, and both demonstrate the fascinating periodic structure in topology. They showcase the rich interplay between algebra, geometry, and topology.
The beauty and elegance of these periodic phenomena lie not only in their symmetric simplicity but also in the insights they provide into the boundless creativity that inspires the evolution of topology and mathematics in general, reaching far and wide across dimensions yet untold.

\end{document}